\newtheorem{thm}{Theorem}[section]
\newtheorem{lem}[thm]{Lemma}
\newtheorem{cor}[thm]{Corollary}
\newtheorem{rem}[thm]{Remark}
\newtheorem{prp}[thm]{Proposition}
\newcommand{\conv}{{\mathrm{conv}}\,}
\newcommand{\ee}{\varepsilon}
\newcommand{\R}{\mathbb{R}}
\newcommand{\N}{\mathbb{N}}
\newcommand{\cS}{\mathcal{S}}
\newcommand{\Spm}{S_{\varphi,\mu}}
\newcommand{\SpAm}{S_{\varphi,A\mu}}
\begin{document}
\hfill\date{}
\bigskip

\title[THE DUAL ORLICZ-BRUNN-MINKOWSKI THEORY]{THE DUAL ORLICZ-BRUNN-MINKOWSKI THEORY}

\author[Richard J. Gardner, Daniel Hug, Wolfgang Weil, and Deping Ye]
{Richard J. Gardner, Daniel Hug, Wolfgang Weil, and Deping Ye}
\address{Department of Mathematics, Western Washington University,
Bellingham, WA 98225-9063, USA} \email{richard.gardner@wwu.edu}
\address{Karlsruhe Institute of Technology, Department of Mathematics,
D-76128 Karlsruhe, Germany}
\email{daniel.hug@kit.edu}
\address{Karlsruhe Institute of Technology, Department of Mathematics,
D-76128 Karlsruhe, Germany} \email{wolfgang.weil@kit.edu}
\address{Department of Mathematics and Statistics, Memorial University of Newfoundland,\newline St.~John's, Newfoundland, Canada A1C 5S7} \email{deping.ye@mun.ca}
\thanks{First author supported in
part by U.S.~National Science Foundation Grant DMS-1402929.   Second and third authors supported in part by German Research Foundation (DFG) grants HU 1874/4-2 and WE 1613/2-2. Fourth author supported in part by an NSERC grant.}
\subjclass[2010]{Primary: 52A20, 52A30; secondary: 52A39, 52A40} \keywords{compact convex set, star set, star body, Brunn-Minkowski theory, Orlicz-Brunn-Minkowski theory, Minkowski addition, $L_p$ addition, $M$-addition, Orlicz addition, radial addition, Brunn-Minkowski inequality, Minkowski's first inequality}

\pagestyle{myheadings}
\markboth{RICHARD J. GARDNER, DANIEL HUG,
WOLFGANG WEIL, AND DEPING YE}{THE ORLICZ-BRUNN-MINKOWSKI THEORY}

\begin{abstract}
This paper introduces the dual Orlicz-Brunn-Minkowski theory for star sets. A radial Orlicz addition of two or more star sets is proposed and a corresponding dual Orlicz-Brunn-Minkowski inequality is established. Based on a radial Orlicz linear combination of two star sets, a formula for the dual Orlicz mixed volume is derived and a corresponding dual Orlicz-Minkowski inequality proved.  The inequalities proved yield as special cases the precise duals of the conjectured log-Brunn-Minkowski and log-Minkowski inequalities of B\"{o}r\"{o}czky, Lutwak, Yang, and Zhang.  A new addition of star sets called radial $M$-addition is also introduced and shown to relate to the radial Orlicz addition.
\end{abstract}

\maketitle

\section{Introduction}
The combination of Minkowski addition and volume leads to the rich and powerful classical Brunn-Minkowski theory for compact convex sets, which constitutes the core of modern convex geometry. Important results such as the Brunn-Minkowski inequality and Minkowski's first inequality play fundamental roles in attacking problems in analysis, geometry, quantum information theory, random matrices, and many other fields. Readers are referred to the excellent treatise by Schneider \cite{Sch93} for more information and references.

In the same spirit, the combination of radial addition and volume produces a corresponding theory for star sets called the dual Brunn-Minkowski theory. (See Section~\ref{prelim} for definitions.) This was initiated by Lutwak \cite{L1}, who also took a further major step in \cite{L5}, where several important concepts and fundamental results in the classical Brunn-Minkowski theory were provided with dual counterparts. For instance, the dual Minkowski inequality for the dual mixed volume is analogous to Minkowski's first inequality for the mixed volume, and plays a key role in the solution of the Busemann-Petty problem (see \cite{G3, GKS, Z1}), as do intersection bodies, the notion dual to projection bodies.  The dual Brunn-Minkowski theory has connections and applications to integral geometry, Minkowski geometry, the local theory of Banach spaces, geometric tomography, and stereology; see \cite{Gar06} and the references given there. The literature is large and continues to grow.  See, for example, \cite{Bernig2014, GardnerDP2014, FNRZ, Gardner2007, GHW, GZ, KPZ, Milman2006}.

One way to extend the classical Brunn-Minkowski theory and its dual is to replace the linear function $\varphi(t)=t$ (note that both Minkowski and radial addition are linear) by $\varphi(t)=t^p$.  When $p\ge 1$, Minkowski addition of convex bodies becomes $L_p$ addition, introduced by Firey \cite{Firey1961, Firey1962} and when $p\neq 0$, radial addition of star sets becomes $p$th radial addition. The combination of these additions with volume leads to the $L_p$-Brunn-Minkowski theory for convex bodies and its dual.  However, the $L_p$-Brunn-Minkowski theory only began in earnest with the ground-breaking paper of Lutwak \cite{L3}, after which it has had an enormous impact, providing stronger affine isoperimetric inequalities than the classical Brunn-Minkowski theory and strengthening links with information theory.  We refer the reader to the introductions in \cite{GHW, GHW2} and to \cite[Chapter~9]{Sch93} for more information and references.

The most recent extension of the classical Brunn-Minkowski theory is the new Orlicz-Brunn-Minkowski theory, with the homogeneous function $t^p$ replaced by a generally nonhomogeneous function $\varphi(t)$.  The Orlicz-Brunn-Minkowski theory for convex bodies was launched by Lutwak, Yang, and Zhang \cite{LYZ7, LYZ8} with affine isoperimetric inequalities for Orlicz centroid and projection bodies.  The lack of homogeneity of the function $\varphi(t)$ meant that the problem of defining a corresponding Orlicz addition of convex bodies remained, but this obstacle was overcome by Gardner, Hug, and Weil \cite{GHW2}.  (It turns out, mainly as a consequence of results obtained in \cite{GHW}, that Orlicz addition is not associative unless it is already $L_p$ addition.)  These authors also provide a general framework for the Orlicz-Brunn-Minkowski theory, derive formulas for the Orlicz mixed volume of two convex bodies, and prove Orlicz-Brunn-Minkowski and Orlicz-Minkowski inequalities, whose classical counterparts (the  Brunn-Minkowski inequality and Minkowski's first inequality) have numerous applications in many fields.  (Some of these results were obtained independently by Xi, Jin, and Leng \cite{XJL}.)  The new theory has already attracted considerable interest; see, for example, \cite{bor2013, BLYZ, bor2013-2, Chen2011, HLYZ, HabP, Li2011, Ludwig2009, Ye2012, Ye2013, Ye2014, Zhu2012}.
	
This paper aims to provide the basic setting for the dual Orlicz-Brunn-Minkowski theory for star sets.  In some respects, this is more delicate than the Orlicz-Brunn-Minkowski theory for convex bodies, partly due to the various flavors of star sets that have to be considered. In Section~\ref{Onorms}, radial Orlicz addition for two or more star sets is introduced and its basic properties are established.  Like Orlicz addition, the new radial Orlicz addition, defined by (\ref{Orldef}) below and denoted by $\widetilde{+}_{\varphi}$, enjoys several useful properties such as continuity and $GL(n)$ convariance, but it is associative only when it is $p$th radial addition; see Corollary~\ref{nonassoc}.  In Section~\ref{DBMI}, we prove a dual Orlicz-Brunn-Minkowski inequality, a special case of which is as follows.

{\em If $\varphi\in \Phi_2$ and ${\varphi_0}(x_1, x_2)=\varphi(x_1^{1/n},  x_2^{1/n})$ is concave, then for star sets $K$ and $L$ in $\R^n$ with $V_n(K)+V_n(L)>0$,
$$\varphi\left(\left(\frac{V_n(K)}{V_n(K\widetilde{+}_{\varphi}L)}\right)^{1/n},
\left(\frac{V_n(L)}{V_n(K\widetilde{+}_{\varphi}L)}\right)^{1/n}\right)\ge 1,$$
while if $\varphi_0$ is convex, the inequality is reversed.  If $\varphi_0$ is strictly concave (or convex, as appropriate) and $K$ and $L$ are star bodies with positive radial functions, then equality holds if and only if $K$ and $L$ are dilatates.}

Here $V_n$ denotes $n$-dimensional Hausdorff measure and $\Phi_m$, $m\in \N$, is the class of continuous functions $\varphi:[0,\infty)^m\to [0,\infty)$ that are strictly increasing in each component and such that $\varphi(o)=0$ and $\lim_{t\to \infty} \varphi(tx) =\infty$, for each $x\in [0,\infty)^m\setminus\{o\}$.  A similar result holds for $\varphi$ in a certain class $\Psi_2$ of functions that decrease in each component.

In Section~\ref{radOlc}, we develop a formula for the dual Orlicz mixed volume of star sets $K$ and $L$, denoted by $\widetilde{V}_\varphi (K,L)$, based on a definition of radial Orlicz linear combination.  This appears in the following dual Orlicz-Minkowski inequality proved in Theorem~\ref{maindom}.

{\em Let $\varphi:(0,\infty)\to \R$ be such that $\varphi_0(t)=\varphi(t^{1/n})$, $t>0$, is concave.  Suppose that $K$ and $L$ are star bodies in $\R^n$ with positive radial functions. Then
$$
\widetilde{V}_\varphi (K,L)\le V_n(K)\,\varphi\left(\left( \frac{V_n(L)}{V_n(K)} \right)^{1/n}\right),
$$
while if $\varphi_0$ is convex, the inequality is reversed.  If $\varphi_0$ is strictly concave (or convex, as appropriate), then equality holds if and only if $K$ and $L$ are dilatates.}

This dual Orlicz-Minkowski inequality is fundamental in establishing Orlicz affine isoperimetric inequalities for the dual Orlicz affine and geominimal surface areas; see \cite{Ye2014b}.

The point of the Orlicz-Brunn-Minkowski theory and its dual is, of course, that they extend the $L_p$-Brunn-Minkowski theory and its dual, in a nontrivial and productive fashion.  Thus when $\varphi(x_1,x_2)=x_1^p+x_2^p$ and $\varphi(t)=t^p$, the two inequalities stated above become the corresponding dual $L_p$-Brunn-Minkowski and dual $L_p$-Minkowski inequality, respectively.  Other choices are possible, however.  It is particularly interesting to note that when $\varphi(t)=\log t$, the dual Orlicz-Minkowski inequality becomes the precise dual of the conjectured (and so far proved only for $n=2$) $\log$-Minkowski inequality (see \cite[p.~1976]{BLYZ} and Theorem~\ref{starlog} below and the remarks thereafter).  Moreover, a suitable choice for $\varphi(x_1,x_2)$ in the dual Orlicz Brunn-Minkowski inequality yields the precise dual of the $\log$-Brunn-Minkowski inequality, proved in \cite{BLYZ} to be equivalent to the $\log$-Minkowski inequality (see \cite[Problem~1.1]{BLYZ} and Section~\ref{DBMI} below).

Section~\ref{radialMadd} is dedicated to a new addition of star sets that we call radial $M$-addition.  Once again this is dual to a concept in the classical Brunn-Minkowski theory, called $M$-addition.  Briefly, the $M$-sum (or radial $M$-sum) of two sets is a very natural generalization of Minkowski addition (or radial addition, respectively) in which coefficients for linear combinations (or radial linear combinations, respectively) are taken from the coordinates of vectors in a set $M\subset\R^2$.  Introduced in a special situation by Protasov \cite{Pro97, Pro99}, $M$-addition was rediscovered, generalized, and systematically investigated in \cite{GHW}, where it was shown that any continuous and $GL(n)$-covariant operation between origin-symmetric compact convex sets must be an $M$-addition for some compact convex $M$ symmetric with respect to the coordinate axes.  The significance of $M$-addition was heightened when in \cite{GHW2} it was proved that in this context (addition of origin-symmetric compact convex sets), $M$-addition and Orlicz addition are essentially equivalent.  In Theorem~\ref{orlmadd}, we prove that this is also true for radial $M$-addition and radial Orlicz addition if $\varphi\in \Phi_m$ is a convex function, but otherwise examples show that there is generally not such a close relationship between the two additions.

Special cases of some of the results in this paper were obtained independently in \cite[Chapter~5]{ZZ} (see also \cite{ZZX}).  A brief description of the genesis of our paper and how it compares to \cite{ZZ} may be found in the Appendix.

\section{Definitions and preliminaries}\label{prelim}
Mostly we follow \cite[Section 2]{GHW2} and in an effort to keep this paper short we refer the reader there for all unexplained notation and terminology, which in any case is rather standard for convex geometry.  In this section we therefore focus on new ingredients, i.e., those not used in \cite{GHW2}, and the few notations we adopt in the present paper different from those in \cite{GHW2}.

We denote by $o$ the origin in $\R^n$ and by $\{e_1,\dots,e_n\}$ its standard basis.  The closed unit ball in $\R^n$ is denoted by $B^n$.

We write $V_k$ for $k$-dimensional Hausdorff measure in $\R^n$, where $k\in \{1,\dots,n\}$.  The notation $dz$ means $dV_k(z)$ for the appropriate $k=1,\dots,n$, unless stated otherwise.

A set $K$ in $\R^n$ is {\it star-shaped at $o$} if $o\in K$ and for each   $x\in\R^n\setminus\{o\}$, the intersection $K\cap\{cx:c\ge 0\}$ is a (possibly degenerate) compact line segment. If $K$ is star-shaped at $o$, we define its {\it radial
function} $\rho_K$ for $x\in \R^n\setminus\{o\}$ by
$$
\rho_{K}(x)= \max\{c\ge 0:cx\in K\}.
$$
This definition is a slight modification of \cite[(0.28)]{Gar06};
as defined here, the domain of $\rho_K$ is always $\R^n\setminus\{o\}$.  Radial functions are {\em homogeneous of degree $-$1}, that is,
$$\rho_K(rx)=r^{-1}\rho_K(x),$$
for all $x\in \R^n\setminus\{o\}$ and $r>0$, and are therefore often regarded as functions on the unit sphere $S^{n-1}$.  Conversely, any nonnegative and homogeneous of degree $-1$ function on $\R^n\setminus\{o\}$ is the radial function of a unique subset of $\R^n$ that is star-shaped at $o$.

In this paper, a {\it star set} in $\R^n$ is a bounded Borel set that is star-shaped at $o$.   We denote the class of star sets in
$\R^n$ by ${\mathcal S}^n$.  Note that ${\mathcal S}^n$ is closed under finite unions, countable intersections, and intersections with subspaces. Also, if a set $K$ in $\R^n$ is star-shaped at $o$, then $K\in {\cS}^n$ if and only if $\rho_K$, restricted to $S^{n-1}$, is a bounded Borel-measurable function. Let $\mathcal{S}^n_c$ denote the class of {\em star bodies} in $\R^n$, i.e., star sets with a continuous radial function.  We write ${\mathcal S}^n_+$ and ${\mathcal S}^n_{c+}$ for the subclasses of ${\mathcal S}^n$ and ${\mathcal S}^n_c$, respectively, whose members have radial functions that are positive on $S^{n-1}$.  Then ${\mathcal S}^n_{c+}$ consists of star bodies that contain the origin in their interiors.  An extra subscript $s$ stands for origin-symmetric sets.  Our definitions and notation differ from those used elsewhere, such as \cite[Section~0.7]{Gar06}, \cite{GHW}, and \cite{GV}.

Define the {\em radial sum} $x\widetilde{+}y$ of $x,y\in \R^n$ by
$$
x\widetilde{+}y=\left\{\begin{array}{ll} x+y &
\mbox{if $x$, $y$, and $o$ are collinear,}\\
o & \mbox{otherwise.}
\end{array}\right.
$$
Then the radial linear combination $\alpha K\widetilde{+}\beta L$, where $K, L\in {\cS}^n$ and $\alpha, \beta\ge 0$, can be defined either by
$$
\alpha K\widetilde{+}\beta L=\{\alpha x\widetilde{+}\beta y: x\in K, y\in L\},
$$
or by
\begin{equation}\label{radials2}
\rho_{\alpha K\widetilde{+}\beta L}=\alpha\rho_K+\beta\rho_L.
\end{equation}
More generally, for $p\in \R$, $p\neq 0$, the {\em $p$th radial linear combination} $\alpha K\widetilde{+}_p\,\beta L$, where $K, L\in {\cS}^n$ and $\alpha, \beta\ge 0$, can be defined by
\begin{equation}\label{radialslp}
\rho_{\alpha K\widetilde{+}_p\,\beta L}^p=\alpha\rho_K^p+\beta\rho_L^p.
\end{equation}
Here (\ref{radialslp}) is interpreted to mean that if $p<0$ and $\rho_K(x)\rho_L(x)=0$, then $\rho_{\alpha K\widetilde{+}_p\,\beta L}(x)=0$.  See \cite[Section~5.4]{GHW}.  Clearly, $\alpha K\widetilde{+}_p\,\beta L\in {\cS}^n$.  The operations of {\em radial addition} and {\em $p$th radial addition} are the special cases of (\ref{radials2}) and (\ref{radialslp}), respectively, when $\alpha=\beta=1$.

The {\em radial metric} $\widetilde{\delta}$ defines the distance between star sets $K,L\in {\mathcal S}^n$ by
$$\widetilde{\delta}(K,L)=\|\rho_K-\rho_L\|_{\infty}=\sup_{u\in S^{n-1}}|\rho_K(u)-\rho_L(u)|.$$
The radial metric differs considerably from the Hausdorff metric; for example, the radial distance between any two different origin-symmetric line segments containing the origin and of length two is one.  Unless specified otherwise, all statements involving a topology on $({\mathcal S}^n)^m$, $m\in \N$, refer to that generated by $\widetilde{\delta}$.

The {\em dual cone measure} of a star set $K$ in $\R^n$ such that $V_n(K)>0$ is the Borel probability measure $\widetilde{V}_K$ in $S^{n-1}$ defined by
\begin{equation}\label{dualcone}
d\widetilde{V}_K(u)=\frac{\rho_{K}(u)^n}{nV_n(K)}\,du.
\end{equation}

Let $I$ be a possibly infinite interval in $\R$.  The {\em left derivative} and {\em right derivative} of a function $f:I\to\R$ are denoted by $f'_l$ and $f'_r$, respectively.

Let $\Phi_m$, $m\in \N$, be the set of all continuous functions
$\varphi:[0,\infty)^m\to [0,\infty)$ that are strictly increasing in each component and such that $\varphi(o)= 0$ and $\lim_{t\to \infty} \varphi(tx) =\infty$, for each $x\in [0,\infty)^m\setminus\{o\}$.  Let $\Psi_m$, $m\in \N$, be the set of all continuous functions $\varphi:(0,\infty)^m\to(0,\infty)$ that are strictly decreasing in each component and such that $\lim_{t\to 0} \varphi(tx) =\infty$ and $\lim_{t\to \infty} \varphi(tx) =0$, for each $x\in (0,\infty)^m$.  We also denote by $\Phi_m^{(1)}$ and $\Psi_1^{(1)}$ the classes of functions in $\Phi_m$ and $\Psi_1$, respectively, such that $\varphi(e_j)=1$ for $j=1,\dots,m$.  We caution the reader that similar notation was used in \cite{GHW2} for different classes of functions.

The prototype function is $\varphi(x_1,\dots,x_m)=x_1^p+\cdots+x_m^p$, which belongs to $\Phi_m^{(1)}$ if $p>0$ and to $\Psi_m$ if $p<0$.

\begin{rem}\label{rem1}
{\em These classes of functions are chosen for convenience. Several of the results below hold for more general classes of functions; for example, everything in Sections~\ref{Onorms} and~\ref{DBMI} holds when the limits 0 and $\infty$ in the definitions of $\Phi_m$ and $\Psi_m$ are replaced by limits contained in $[0,1)$ and $(1,\infty]$, respectively, provided the measure $\mu$ there is a probability measure.  The same applies, with appropriate modifications, when $\Phi_m$ is replaced by the class $\Phi_m'$ of all continuous functions $\varphi:(0,\infty)^m\to(0,\infty)$ that are strictly increasing in each component and such that $\lim_{t\to 0} \varphi(tx) <1$ and $\lim_{t \to \infty} \varphi(tx)>1$, for each $x\in (0,\infty)^m$, but in this case the star sets involved should have positive radial functions. Note that if $\varphi(x_1,\dots,x_m)=x_1\cdots x_m$, for example, then $\varphi$ is in $\Phi_m'$ if $\varphi$ is restricted to $(0,\infty)^m$, but $\varphi$ is not
in $\Phi_m$ as a function on $[0,\infty)^m$.}
\end{rem}

Jensen's inequality has many versions; see, for example, \cite[Lemma~1, p.~76 and Exercise~9, p.~80]{Fer}.  For the reader's convenience, we state the precise form we need and supply a brief proof.

\begin{prp}\label{prpJ}
{\rm{(Jensen's inequality.)}} Let $\mu$ be a probability measure in a space $X$, let $U$ be an open convex set in $\R^n$, and let $\varphi$ be a convex real-valued function on $U$. Assume that $g:X\to U$ is measurable and component-wise $\mu$-integrable, and that $\varphi\circ g$ is $\mu$-integrable.
Let $z_0=\int_X g(x)\,d\mu(x)$.  Then $z_0\in U$ and
\begin{equation}\label{JenIneq}
\int_X\varphi(g(x))\,d\mu(x)\ge \varphi\left(\int_Xg(x)\,d\mu(x)\right).
\end{equation}
If $\varphi$ is strictly convex, then equality holds if and only if $g(x)=z_0$ for $\mu$-almost all $x\in X$.

If $\varphi$ is concave, then the inequality in \eqref{JenIneq} is reversed, with the same equality condition if $\varphi$ is strictly concave.
\end{prp}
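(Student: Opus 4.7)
The plan is to prove this via the standard subgradient argument, split into three pieces: first showing $z_0 \in U$, then using a supporting affine function to $\varphi$ at $z_0$, and finally extracting the equality cases.

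For the first step, suppose for contradiction that $z_0 \notin U$. Since $U$ is open and convex, either $z_0 \notin \overline{U}$ or $z_0 \in \partial U$. In both cases the Hahn-Banach separation theorem gives a nonzero linear functional $\ell$ on $\R^n$ and a constant $c$ with $\ell(y) \ge c$ for all $y \in \overline{U}$ and $\ell(z_0) \le c$; because $U$ is open, in fact $\ell(y) > c$ for every $y \in U$. Then $\ell \circ g$ is $\mu$-integrable, strictly greater than $c$ pointwise, and satisfies
$$\int_X \ell(g(x))\,d\mu(x) = \ell(z_0) \le c$$
by linearity and the definition of $z_0$. This forces $\ell(g(x)) = c$ on a set of full $\mu$-measure, contradicting $\ell \circ g > c$. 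Hence $z_0 \in U$.

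Next, since $\varphi$ is convex on the open convex set $U$ and $z_0 \in U$, there exists a subgradient $v \in \R^n$ such that
$$\varphi(y) \ge \varphi(z_0) + \langle v, y - z_0\rangle \quad \text{for all } y \in U.$$
Substituting $y = g(x)$ and integrating against $\mu$ yields
$$\int_X \varphi(g(x))\,d\mu(x) \ge \varphi(z_0) + \Bigl\langle v, \int_X g(x)\,d\mu(x) - z_0\Bigr\rangle = \varphi(z_0),$$
which is \eqref{JenIneq}. All integrals exist by the hypothesis that $g$ is component-wise $\mu$-integrable and $\varphi \circ g$ is $\mu$-integrable; the linear term poses no issue because $\mu$ is a probability measure.

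For the equality case, suppose $\varphi$ is strictly convex and equality holds in \eqref{JenIneq}. Then the nonnegative function $\varphi(g(x)) - \varphi(z_0) - \langle v, g(x) - z_0\rangle$ has zero integral, hence vanishes $\mu$-almost everywhere. But strict convexity forces the subgradient inequality $\varphi(y) \ge \varphi(z_0) + \langle v, y-z_0\rangle$ to be strict whenever $y \ne z_0$, so $g(x) = z_0$ for $\mu$-almost all $x$. The concave case follows by applying the convex case to $-\varphi$, which reverses \eqref{JenIneq} and preserves the equality characterization. The only delicate point is the first step, ensuring $z_0 \in U$ (rather than merely in $\overline{U}$) so that a subgradient of $\varphi$ at $z_0$ exists; the separation argument above handles this cleanly.
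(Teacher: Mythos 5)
Your proposal is correct and follows essentially the same route as the paper: a separation argument to place $z_0$ in $U$, a subgradient (supporting affine function) at $z_0$, and integration, with the equality case extracted from the strictness of the subgradient inequality and the concave case handled by passing to $-\varphi$. The only difference is that you spell out the separation step (which the paper dispatches in one sentence), and your treatment of it is accurate.
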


\begin{proof}
Suppose that $\varphi$ is convex.  The fact that $z_0\in U$ follows from a separation argument. If $v$ belongs to the subgradient at $z_0$, which is nonempty since $U$ is open and $z_0\in U$, then
$$
\varphi(z)\ge \varphi(z_0)+\langle v,z-z_0\rangle,
$$
for all $z\in U$. If $x\in X$ and $g(x)=z$, we get
$$
\varphi(g(x))\ge \varphi(z_0)+\langle v,g(x)-z_0\rangle.
$$
Integration with respect to $\mu$, using the integrability assumptions and the definition of $z_0$, yields
$$
\int_X\varphi(g(x))\,d\mu(x)\ge \varphi(z_0)+\int_X\langle v,g(x)-z_0\rangle\,d\mu(x)= \varphi(z_0)+\langle v,o\rangle=\varphi(z_0).
$$
This proves (\ref{JenIneq}).  If equality holds in (\ref{JenIneq}) and $\varphi$ is strictly convex, then the previous display shows that we must have
$$
\varphi(g(x))=\varphi(z_0)+\langle v,g(x)-z_0\rangle,
$$
for $\mu$-almost all $x\in X$. Hence $g(x)=z_0$ for $\mu$-almost all $x\in X$.

If $\varphi$ is concave on $U$, the result follows from the above argument applied to the convex function $-\varphi$.
\end{proof}

\section{Radial Orlicz addition and Orlicz intersection bodies}\label{Onorms}
Let $m,n\ge 2$ and let $\mu$ be a nonzero finite Borel measure in $({\mathcal{S}}^n)^m$ with support contained in a bounded separable subset $C\subset({\mathcal{S}}^n)^m$ (with respect to the product radial metric). Note that $({\mathcal{S}}^n_c)^m$ is a separable subset, while $({\mathcal{S}}^n)^m$ itself is not separable.  For each $\varphi\in \Phi_m$, we define
\begin{equation}
\label{newdeff1}
\rho_{\Spm}(x)
=\inf\left\{\lambda>0: \int_{\left({\cS}^n\right)^m}\varphi\left(\frac{\rho_{K_1}(x)}{\lambda}
,\dots,\frac{\rho_{K_m}(x)}{\lambda}\right)\, d\mu(K_1,\dots,K_m)\le 1\right\},
\end{equation}
for all $x\in \R^n\setminus\{o\}$.  If $\varphi\in \Psi_m$, we assume in addition that the support of $\mu$ is contained in $(\mathcal{S}^n_+)^m$, and for $x\in \R^n\setminus\{o\}$, define $\rho_{\Spm}(x)$ by (\ref{newdeff1}), but with $\ge 1$ instead of $\le 1$.

This definition requires some discussion.   By our assumptions, there is an $M>0$ such that if $(K_1,\dots,K_m)\in C$, then $\rho_{K_j}(u)\le M$, for all $u\in S^{n-1}$ and $j=1,\dots,m$ (all sets are contained in $MB^n$). Let $u\in S^{n-1}$ and let $\lambda>0$. There is a unique $\tau>0$ such that $\varphi(\tau,\dots,\tau)=1/\mu(C)$. Then for $\varphi\in \Phi_m$, we have
\begin{equation}\label{incinc}
\int_C\varphi\left(\frac{\rho_{K_1}(u)}{\lambda}
,\dots,\frac{\rho_{K_m}(u)}{\lambda}\right)\, d\mu(K_1,\dots,K_m)
\le \mu(C)\varphi(M/\lambda,\dots,M/\lambda)\le 1
\end{equation}
provided $\lambda\ge M/\tau$. Therefore $\rho_{\Spm}(u)\le M/\tau$ and hence $\rho_{\Spm}$ is bounded.  For $\varphi\in \Psi_m$, the inequalities in (\ref{incinc}) are reversed, but in view of the reversed inequality in (\ref{newdeff1}) and the fact that $\varphi$ is decreasing in each component, the conclusion is the same. Since the function $\rho_{\Spm}$ on $\R^n\setminus\{o\}$ just defined is nonnegative and homogeneous of degree $-1$, it is the radial function of a set that is star-shaped at $o$. Next, observe that the function on $C\times S^{n-1}$ that maps $(K_1,\dots,K_m,u)$ to the integrand in (\ref{newdeff1}) is continuous in each of the first $m$ variables and Borel measurable in $u$.  It follows that it is jointly Borel measurable.  Here we use the fact that $C$ is separable and \cite[Exercise~11.3]{Kec}, which can be solved by adjusting the argument in \cite[Theorem~1]{Rud}.  Therefore $\rho_{\Spm}$ is Borel measurable and thus $S_{\varphi,\mu}\in \mathcal{S}^n$.

In particular, in the very special but important case when $\mu$ is defined by (\ref{mugood}) below, $S_{\varphi,\mu}$ is always a star set.

An alternative description of $\rho_{\Spm}(x)$, $x\in\R^n\setminus\{o\}$, is as follows. We first consider the case $\varphi\in\Phi_m$. If $\rho_{\Spm}(x)>0$, then by Lebesgue's dominated convergence theorem,  $\rho_{\Spm}(x)$ is the unique $\lambda=\lambda(x)>0$ such that
\begin{equation}\label{eqeq1}
\int_{\left({\cS}^n\right)^m}\varphi\left(\frac{\rho_{K_1}(x)}{\lambda}
,\dots,\frac{\rho_{K_m}(x)}{\lambda}\right)\, d\mu(K_1,\dots,K_m)= 1.
\end{equation}
If  $\rho_{\Spm}(x)=0$, then $\mu$ is concentrated on the set of all $(K_1,\ldots,K_m)\in (\mathcal{S}^n)^m$ satisfying $\rho_{K_j}(x)=0$, for $j=1,\ldots,m$.
If also $C\subset (\mathcal{S}^n_c)^m$, then the function on $C\times S^{n-1}$ that maps $(K_1,\dots,K_m,u)$ to the integrand in (\ref{newdeff1}) is  continuous in $u$. Lebesgue's dominated convergence theorem then yields the continuity of $\rho_{\Spm}$ so that in this case, we have $S_{\varphi,\mu}\in \mathcal{S}^n_c$. Next we consider the case $\varphi\in\Psi_m$. In order to obtain \eqref{eqeq1} again, we make the additional assumption that there is some fixed
$L\in \mathcal{S}^n_+$ such that $L\subset K_j$, for $j=1,\ldots,m$, whenever $(K_1,\ldots,K_m)\in C$. Then, for each $u\in S^{n-1}$ and $\lambda>0$,
\begin{equation}\label{incinc2}
\int_C\varphi\left(\frac{\rho_{K_1}(u)}{\lambda}
,\dots,\frac{\rho_{K_m}(u)}{\lambda}\right)\, d\mu(K_1,\dots,K_m)
\le \mu(C)\varphi(\rho_{L}(u)/\lambda,\dots,\rho_{L}(u)/\lambda)<\infty.
\end{equation}
For $0<\lambda\le \rho_{L}(u)/\tau$, the right side of \eqref{incinc2} is bounded from above by $1$. Now we can argue as before to see that  $\rho_{\Spm}(x)$, $x\in\R^n\setminus\{o\}$, is the unique $\lambda=\lambda(x)>0$ such that \eqref{eqeq1} is satisfied, and hence that $S_{\varphi,\mu}\in\mathcal{S}^n_+$. Moreover, if $C\subset (\mathcal{S}^n_{c+})^m$ and if $L$ contains $rB^n$ for some $r>0$, then $S_{\varphi,\mu}\in\mathcal{S}^n_{c+}$.

\begin{lem}\label{dualconstlem}
Let $m,n\ge 2$, let $\varphi\in \Phi_m$, and let $\mu$ be a nonzero finite Borel measure in $({\cS}^n)^m$ with support contained in a bounded separable subset of $({\cS}^n)^m$.  If $A\in GL(n)$, then
$$
A\left(\Spm\right)=\SpAm.
$$
The same statement holds when $\varphi\in \Psi_m$ and ${\cS}^n$ is replaced by $\mathcal{S}^n_+$.
\end{lem}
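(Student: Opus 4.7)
The plan is to reduce the set identity to the pointwise statement $\rho_{A(S_{\varphi,\mu})}=\rho_{S_{\varphi,A\mu}}$ on $\R^n\setminus\{o\}$, and then to exploit the elementary fact $\rho_{AK}(x)=\rho_K(A^{-1}x)$ together with the standard change-of-variables formula for the pushforward measure $A\mu$, which here denotes the measure on $({\cS}^n)^m$ obtained by pushing $\mu$ forward under the map $T_A:(K_1,\dots,K_m)\mapsto(AK_1,\dots,AK_m)$.

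First, I would verify that $A\mu$ is an admissible input for the construction in (\ref{newdeff1}). Since $\rho_{AK}(u)=\rho_K(A^{-1}u)$ for $u\in S^{n-1}$ and $|A^{-1}u|\ge\|A\|^{-1}$, the map $T_A$ is Lipschitz of constant $\|A\|$ in the product radial metric; in particular it is Borel measurable and sends the bounded separable support of $\mu$ to a bounded separable subset of $({\cS}^n)^m$. In the $\Psi_m$ case, $T_A$ also preserves $({\cS}^n_+)^m$, and if an auxiliary set $L\in\mathcal{S}^n_+$ is uniformly contained in each $K_j$ on the support of $\mu$, then $AL$ is uniformly contained in each $L_j=AK_j$ on the support of $A\mu$, so the bound behind (\ref{incinc2}) is preserved and $\rho_{\SpAm}$ is well defined.

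Second, fix $x\in\R^n\setminus\{o\}$. Using $\rho_{A(S_{\varphi,\mu})}(x)=\rho_{S_{\varphi,\mu}}(A^{-1}x)$, definition (\ref{newdeff1}) presents this number as the infimum of those $\lambda>0$ for which
$$\int_{({\cS}^n)^m}\varphi\!\left(\frac{\rho_{K_1}(A^{-1}x)}{\lambda},\dots,\frac{\rho_{K_m}(A^{-1}x)}{\lambda}\right)d\mu(K_1,\dots,K_m)\le 1.$$
Substituting $\rho_{K_j}(A^{-1}x)=\rho_{AK_j}(x)$, the integrand depends on $(K_1,\dots,K_m)$ only through $T_A(K_1,\dots,K_m)$, so the change-of-variables formula for pushforward measures rewrites this integral as
$$\int_{({\cS}^n)^m}\varphi\!\left(\frac{\rho_{L_1}(x)}{\lambda},\dots,\frac{\rho_{L_m}(x)}{\lambda}\right)d(A\mu)(L_1,\dots,L_m).$$
This is precisely the integral appearing in the definition of $\rho_{\SpAm}(x)$, so the two defining infima agree and the two sets coincide. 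The $\Psi_m$ case is treated identically, with the inequality $\le 1$ replaced by $\ge 1$ throughout.

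The main obstacle I anticipate is not the change-of-variables step, which becomes formal once $A\mu$ is interpreted as the pushforward under $T_A$, but rather the preliminary bookkeeping needed to confirm that $A\mu$ meets the support, measurability, and (in the $\Psi_m$ case) uniform lower-containment hypotheses under which $S_{\varphi,A\mu}$ was defined in the text preceding the lemma. These checks are routine but must be done separately in the two cases so that the single identity $A(S_{\varphi,\mu})=S_{\varphi,A\mu}$ follows uniformly from the same short calculation.
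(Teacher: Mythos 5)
Your proof is correct and is exactly the argument the paper invokes by reference: the authors simply cite the convex-body analogue \cite[Lemma~4.4(ii)]{GHW2} and note that one replaces support functions by radial functions and uses $\rho_{AK}(x)=\rho_K(A^{-1}x)$, which is precisely the substitution-plus-pushforward computation you carry out explicitly. The preliminary admissibility checks you add are reasonable bookkeeping that the paper leaves implicit.
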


\begin{proof}
We omit the details, since the proof is similar to that of \cite[Lemma~4.4(ii)]{GHW2}.  One replaces support functions of compact convex sets by radial functions of star sets and uses the formula $\rho_{AK}(x)=\rho_K(A^{-1}x)$ for $A\in GL(n)$ (see \cite[(0.33), p.~20]{Gar06}) for the change in a radial function under a nonsingular linear transformation, instead of the corresponding formula for the change in a support function.
\end{proof}

Let $m\ge 2$, let $\varphi\in \Phi_m$ (or $\varphi\in\Psi_m$), and for $j=1,\dots,m$, let $K_j\in {{\cS}}^n$ (or $K_j\in \mathcal{S}^n_+$, respectively). Define a measure $\mu$ in $({{\cS}}^n)^m$ (or $({{\cS}}^n_+)^m$, respectively) by
\begin{equation}\label{mugood}
\mu=\delta_{(K_1,\dots,K_m)}=\delta_{K_1}\times\cdots\times\delta_{K_m},
\end{equation}
where $\delta_x$ denotes the Dirac measure (a unit mass at $x$).  The corresponding {\em radial Orlicz sum} of $K_1,\dots,K_m$ is defined to be $\Spm$, where $\Spm$ is as in (\ref{newdeff1}), and is denoted by $\widetilde{+}_{\varphi}(K_1,\dots,K_m)$.  This means that for $\varphi\in \Phi_m$ and $K_j\in {{\cS}}^n$, $j=1,\dots,m$,
\begin{equation}\label{OrlComb1}
\rho_{\widetilde{+}_{\varphi}(K_1,\dots,K_m)}(x)=\inf\left\{\lambda>0: \varphi\left(\frac{\rho_{K_1}(x)}{\lambda}
,\dots,\frac{\rho_{K_m}(x)}{\lambda}\right)\le 1\right\},
\end{equation}
for all $x\in \R^n\setminus\{o\}$. Moreover, from our earlier remarks, or from (\ref{OrlComb1}) directly, it is clear that $\rho_{\widetilde{+}_{\varphi}(K_1,\dots,K_m)}$ is Borel measurable on $S^{n-1}$ and it follows that $\widetilde{+}_{\varphi}(K_1,\dots,K_m)\in {{\cS}}^n$.
Similarly, for $\varphi\in \Psi_m$ and $K_j\in \mathcal{S}^n_+$, $j=1,\dots,m$, $\rho_{\widetilde{+}_{\varphi}(K_1,\dots,K_m)}(x)$ is as in (\ref{OrlComb1}), but with $\ge 1$ instead of $\le 1$, and then  $\widetilde{+}_{\varphi}(K_1,\dots,K_m)\in {{\cS}}^n_+$.

Equivalently, for $\varphi\in \Phi_m$ and $K_j\in {{\cS}}^n$, $j=1,\dots,m$,  the radial Orlicz sum $\widetilde{+}_{\varphi}(K_1,\dots,K_m)$ can be defined implicitly (and uniquely) by
\begin{equation}\label{Orldef}
\varphi\left( \frac{\rho_{K_1}(x)}{\rho_{\widetilde{+}_{\varphi}(K_1,\dots,K_m)}(x)},\dots, \frac{\rho_{K_m}(x)}{\rho_{\widetilde{+}_{\varphi}(K_1,\dots,K_m)}(x)}\right) = 1,
\end{equation}
if $\rho_{K_1}(x)+\cdots+\rho_{K_m}(x)> 0$ and by $\rho_{\widetilde{+}_{\varphi}(K_1,\dots,K_m)}(x)=0$, otherwise, for all $x\in \R^n\setminus\{o\}$. For $\varphi\in \Psi_m$ and $K_j\in {{\cS}}^n_+$, $j=1,\dots,m$, the radial Orlicz sum $\widetilde{+}_{\varphi}(K_1,\dots,K_m)$ can also be defined implicitly (and uniquely) by (\ref{Orldef}). Here the set $L=K_1\cap\cdots\cap K_m\in\mathcal{S}^n_+$ can serve as the star set $L$ required before \eqref{incinc2}.

Note that if $\varphi\in {\Phi}_m$, then $\rho_{\widetilde{+}_{\varphi}(K_1,\dots,K_m)}(x)=0$ implies that $\rho_{K_1}(x)=\cdots=\rho_{K_m}(x)=0$. Also, if $\varphi\in {\Phi}_m^{(1)}$ and $\rho_{K_j}(x)=0$ for all $j\neq j_0$, then \eqref{Orldef} yields $\rho_{\widetilde{+}_{\varphi}(K_1,\dots,K_m)}(x)=\rho_{K_{j_0}}(x)$.

A consequence of our earlier remarks, or of (\ref{Orldef}) directly, is that $\widetilde{+}_{\varphi}:({\cS}^n_c)^m\rightarrow {\cS}^n_c$ for $\varphi\in \Phi_m$ and $\widetilde{+}_{\varphi}:({\cS}^n_{c+})^m\rightarrow {\cS}^n_{c+}$  for $\varphi\in \Phi_m\cup \Psi_m$.

An important special case is obtained when
\begin{equation}\label{spphi}
\varphi(x_1,\dots,x_m)=\sum_{j=1}^m\varphi_j(x_j),
\end{equation}
for some fixed $\varphi_j\in \Phi_1$, $j=1,\dots,m$ (or $\varphi_j\in \Psi_1$, $j=1,\dots,m$).  We then write
$$\widetilde{+}_{\varphi}(K_1,\dots,K_m)=K_1\widetilde{+}_{\varphi}
\cdots\widetilde{+}_{\varphi}K_m.$$
This means that $K_1\widetilde{+}_{\varphi}\cdots\widetilde{+}_{\varphi}K_m$ can be defined for all $x\in \R^n\setminus\{o\}$ and $\varphi_j\in \Phi_1$, $j=1,\dots,m$, by the corresponding special case
\begin{equation}\label{lastone}
\sum_{j=1}^m\varphi_j\left( \frac{\rho_{K_j}(x)}{\rho_{K_1\widetilde{+}_{\varphi}
\cdots\widetilde{+}_{\varphi}K_m}(x)}\right) = 1
\end{equation}
of (\ref{Orldef}), when $\rho_{K_1}(x)+\cdots +\rho_{K_m}(x)>0$, and by $\rho_{K_1\widetilde{+}_{\varphi}\cdots\widetilde{+}_{\varphi}K_m}(x)=0$, otherwise, and similarly by (\ref{lastone}) when $\varphi_j\in \Psi_1$, $j=1,\dots,m$.

\begin{thm}\label{dualOrthm1}
If $m,n\ge 2$ and $\varphi\in \Phi_m$, then radial Orlicz addition $\widetilde{+}_{\varphi}:({\cS}^n)^m\rightarrow {\cS}^n$

\noindent{\rm(i)} is $GL(n)$ covariant, i.e., $A(\widetilde{+}_{\varphi}(K_1,\dots,K_m))=\widetilde{+}_{\varphi}(AK_1,\dots,AK_m)$ for $A\in GL(n)$ and $K_1,\dots,K_m\in {\cS}^n$;

\noindent{\rm(ii)} satisfies $+_{\varphi}:({\cS}^n_s)^m\rightarrow {\cS}^n_s$;

\noindent{\rm(iii)} is homogeneous of degree 1, i.e., $\widetilde{+}_{\varphi}(rK_1,\dots,rK_m)=r\widetilde{+}_{\varphi}(K_1,\dots,K_m)$
for $r\ge 0$ and $K_1,\dots,K_m\in {\cS}^n$;

\noindent{\rm(iv)} is section covariant, i.e., $\widetilde{+}_{\varphi}(K_1,\dots,K_m)\cap S=\widetilde{+}_{\varphi}(K_1\cap S,\dots,K_m\cap S)$ for any subspace $S$ of $\R^n$ and $K_1,\dots,K_m\in {\cS}^n$;

\noindent{\rm(v)} has the identity property, i.e., $\widetilde{+}_{\varphi}(\{o\},\dots,\{o\},K_j,\{o\},\dots,\{o\})=K_j$ for $j=1,\dots,m$ and $K_j\in {\cS}^n$, provided $\varphi\in {\Phi}_m^{(1)}$;

\noindent{\rm(vi)} is monotonic, i.e., if $K_j\subset L_j$ for $K_j,L_j\in {\cS}^n$, $j=1,\dots,m$, then $\widetilde{+}_{\varphi}(K_1,\dots,K_m)\subset \widetilde{+}_{\varphi}(L_1,\dots,L_m)$;

\noindent{\rm(vii)} is such that $\widetilde{+}_{\varphi}:({\cS}^n)^m\rightarrow {\cS}^n$ is continuous in the sense of pointwise convergence of radial functions and $\widetilde{+}_{\varphi}:({\cS}^n_{c+})^m\rightarrow {\cS}^n_{c+}$ is continuous in the radial metric.

With $r>0$ in {\rm(iii)}, all statements except {\rm(v)} hold when $\varphi\in \Psi_m$ and ${\cS}^n$ is replaced by ${\cS}^n_+$.
\end{thm}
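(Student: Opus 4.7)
The plan is to derive each of (i)--(vii) directly from the implicit defining identity~\eqref{Orldef} (equivalently, from~\eqref{OrlComb1}), together with the convention that $\rho_{\widetilde{+}_\varphi(K_1,\dots,K_m)}(x)=0$ when all $\rho_{K_j}(x)$ vanish. The key uniqueness fact I will use repeatedly is that for $\varphi\in\Phi_m$ (resp.\ $\varphi\in\Psi_m$) and any $(a_1,\dots,a_m)\ne o$ (resp.\ with all $a_j>0$), the equation $\varphi(a_1/\lambda,\dots,a_m/\lambda)=1$ has a unique solution $\lambda>0$, since the left side is strictly monotone and continuous in $\lambda$ and sweeps out $(0,\infty)$. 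Property (i) is then the specialization of Lemma~\ref{dualconstlem} to the Dirac mass $\mu=\delta_{(K_1,\dots,K_m)}$, for which $A\mu=\delta_{(AK_1,\dots,AK_m)}$. Property (ii) follows because $\rho_{K_j}(-x)=\rho_{K_j}(x)$ together with uniqueness forces $\rho_{\widetilde{+}_\varphi(K_1,\dots,K_m)}$ to be even. Property (iii) follows from $\rho_{rK_j}=r\rho_{K_j}$ for $r>0$: substituting into~\eqref{Orldef} and comparing to the original yields $\rho_{\widetilde{+}_\varphi(rK_1,\dots,rK_m)}=r\rho_{\widetilde{+}_\varphi(K_1,\dots,K_m)}$; the case $r=0$ is trivial for $\varphi\in\Phi_m$ but excluded for $\Psi_m$ because $\{o\}\notin\mathcal{S}^n_+$. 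Property (iv) is immediate from $\rho_{K_j\cap S}(x)=\rho_{K_j}(x)$ for $x\in S\setminus\{o\}$.

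For the identity property (v), set $K_i=\{o\}$ for $i\ne j$ with $\varphi\in\Phi_m^{(1)}$; then $\rho_{K_i}\equiv 0$ on $\R^n\setminus\{o\}$, and whenever $\rho_{K_j}(x)>0$, equation~\eqref{Orldef} reduces to $\varphi((\rho_{K_j}(x)/\lambda)\,e_j)=1$, which together with $\varphi(e_j)=1$ and strict monotonicity of $\varphi$ in its $j$-th slot forces $\lambda=\rho_{K_j}(x)$; the degenerate case is handled by convention. For monotonicity (vi), suppose $K_j\subset L_j$ for each $j$. If $\varphi\in\Phi_m$, enlarging the $\rho_{K_j}$ only increases $\varphi(\rho_{K_1}(x)/\lambda,\dots,\rho_{K_m}(x)/\lambda)$, so the set of $\lambda$ admissible in~\eqref{OrlComb1} shrinks, making the infimum at least as large. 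For $\varphi\in\Psi_m$ the two reversals---the direction of monotonicity of $\varphi$ and the admissibility inequality---cancel, giving the same conclusion.

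Continuity (vii) is the most delicate part. Fix $x\ne o$ and suppose $\rho_{K_j^{(k)}}(x)\to\rho_{K_j}(x)$ for each $j$; set $\lambda^{(k)}:=\rho_{\widetilde{+}_\varphi(K_1^{(k)},\dots,K_m^{(k)})}(x)$. The upper bound $\lambda^{(k)}\le M/\tau$ from~\eqref{incinc} (and the analogous positive lower bound from~\eqref{incinc2} in the $\Psi_m$ case) allows one to pass to a convergent subsequence, and continuity together with strict monotonicity of $\varphi$ then force any limit of $\lambda^{(k)}$ to satisfy the limiting version of~\eqref{Orldef}; uniqueness identifies that limit with $\rho_{\widetilde{+}_\varphi(K_1,\dots,K_m)}(x)$, so the whole sequence converges. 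Degenerate cases are handled by the boundary convention. For the stronger statement on $\mathcal{S}^n_{c+}$ in the radial metric, uniform convergence $\rho_{K_j^{(k)}}\to\rho_{K_j}$ on $S^{n-1}$, combined with uniform positive lower and finite upper bounds on all $\rho_{K_j^{(k)}}(u)$ (valid for large $k$ since $\rho_{K_j}$ is continuous and positive on the compact set $S^{n-1}$), confines the arguments of $\varphi$ to a compact subset of $(0,\infty)^m$, on which $\varphi$ is uniformly continuous; a routine $\varepsilon$-$\delta$ argument then upgrades pointwise to uniform convergence of $\lambda^{(k)}$ on $S^{n-1}$. The main technical obstacle is supplying these uniform bounds on $\lambda^{(k)}(u)$---both the upper bound and the positive lower bound---and this is exactly where the growth conditions at $0$ and $\infty$ built into the definitions of $\Phi_m$ and $\Psi_m$ are essential.
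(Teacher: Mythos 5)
Your proposal is correct and follows essentially the same route as the paper: (i) via Lemma~\ref{dualconstlem}, (iv)--(vi) directly from \eqref{Orldef}/\eqref{OrlComb1}, and (vii) via the uniqueness of the solution of \eqref{Orldef} together with the uniform bounds coming from the $\Phi_m$/$\Psi_m$ growth conditions. The only small differences are that the paper obtains (ii) and (iii) as specializations of (i) to $Ax=-x$ and $Ax=rx$ rather than verifying them directly, and that your ``routine $\varepsilon$--$\delta$ argument'' in (vii) is, in the paper, spelled out as a contradiction-plus-compactness argument (extracting a convergent subsequence $u_{n_i}\to u_0$ with $\lambda^{(n_i)}(u_{n_i})\to c_0>0$, then identifying $c_0$ via the limiting form of \eqref{Orldef}); that subsequence step is what really carries the passage from pointwise to uniform convergence, and uniform continuity of $\varphi$ alone does not replace it, though your sketch contains all the needed ingredients.
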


\begin{proof}  (i) This follows from Lemma~\ref{dualconstlem} in the same way that the $GL(n)$ covariance of Orlicz addition given in \cite[Theorem~5.2]{GHW2} follows from \cite[Lemma~4.4(ii)]{GHW2}.

Parts (ii) and (iii) are direct consequences of (i) applied to the maps $Ax=-x$ and $Ax=rx$, $x\in \R^n$, respectively.

Parts (iv) and (v) follow easily from definition (\ref{Orldef}) and the remarks thereafter; for (iv), one uses the obvious facts that $\rho_{K\cap S}(x)=\rho_K(x)$ if $x\in S$ and $\rho_{K\cap S}(x)=0$ if $x\not\in S$, for $K\in\cS^n$ and $x\in \R^n\setminus \{o\}$.

(vi) This also follows easily from (\ref{Orldef}); the proof is the same as that of the monotonicity of Orlicz addition in \cite[Theorem~5.2]{GHW2}, on replacing support functions by radial functions.

(vii) Let $K_{ij}\in \cS^n$, $i\in \N\cup\{0\}$, $j=1,\dots,m$, be such that $\rho_{K_{ij}}(u)\rightarrow \rho_{K_{0j}}(u)$ for all $u\in S^{n-1}$ as $i\rightarrow\infty$.  The desired conclusion that $\rho_{\widetilde{+}_{\varphi}(K_{i1},\dots,K_{im})}\rightarrow
\rho_{\widetilde{+}_{\varphi}(K_{01},\dots,K_{0m})}$ pointwise as $i\rightarrow\infty$ follows from the continuity of $\varphi$ and the uniqueness of the solution of \eqref{Orldef}.

Now let $K_{ij}\in \cS_{c+}^n$, $i\in \N\cup\{0\}$, $j=1,\dots,m$, be such that $\widetilde{\delta}\left(\rho_{K_{ij}}, \rho_{K_{0j}}\right)\rightarrow 0$, that is,  $\rho_{K_{ij}}\rightarrow \rho_{K_{0j}}$ uniformly on $S^{n-1}$ as $i\rightarrow\infty$, for $j=1,\dots,m$.  From the previous paragraph we know that $\rho_{\widetilde{+}_{\varphi}(K_{i1},\dots,K_{im})}\rightarrow
\rho_{\widetilde{+}_{\varphi}(K_{01},\dots,K_{0m})}$ pointwise as $i\rightarrow\infty$.  If the convergence is not uniform on $S^{n-1}$, then there is an $\ee>0$ such that for all $i\ge 1$, there are $n_i\ge i$ and $u_{n_i}\in S^{n-1}$ such that
\begin{equation}\label{dy1}
|\rho_{\widetilde{+}_{\varphi}(K_{n_i1},\dots,K_{n_im})}(u_{n_i})-
\rho_{\widetilde{+}_{\varphi}(K_{01},\dots,K_{0m})}(u_{n_i})|\ge \ee.
\end{equation}
Since $S^{n-1}$ is compact, we may assume that $u_{n_i}\to u_0$ as $i\to \infty$.  We claim that we may also assume that there is a $c_0>0$ such that $\rho_{\widetilde{+}_{\varphi}(K_{n_i1},\dots,K_{n_im})}(u_{n_i})\to c_0$ as $i\to\infty$.  To see this, note that since $\rho_{K_{ij}}\rightarrow \rho_{K_{0j}}$ uniformly on $S^{n-1}$, we have $c_1\le \rho_{K_{ij}}(u)\le c_2$, for some $c_1, c_2>0$ and all $u\in S^{n-1}$, $i\in \N\cup\{0\}$, and $j=1,\dots,m$.  From (\ref{Orldef}) and our assumptions on $\varphi$, it follows that if $\varphi\in \Phi_m$ and $\tau>0$ are such that $\varphi(\tau,\dots,\tau)=1$, then
\begin{equation}\label{ee1}
c_1/\tau\le \rho_{\widetilde{+}_{\varphi}(K_{n_i1},\dots,K_{n_im})}(u)\le c_2/\tau,
\end{equation}
for all $u\in S^{n-1}$ and $i\in \N\cup\{0\}$.  If $\varphi\in \Psi_m$, then (\ref{ee1}) holds with the inequalities reversed.  In either case, the sequence $(\rho_{\widetilde{+}_{\varphi}(K_{n_i1},\dots,K_{n_im})}(u_{n_i}))$ is bounded and thus has a convergent subsequence, proving the claim.

From (\ref{Orldef}) and the fact that $\rho_{K_{ij}}\rightarrow \rho_{K_{0j}}$ uniformly on $S^{n-1}$ as $i\rightarrow\infty$, for $j=1,\dots,m$, we now obtain
$$1=\varphi\left(\frac{\rho_{K_{n_i1}}(u_{n_i})}{\rho_{\widetilde{+}_{\varphi}
(K_{n_i1},\dots,K_{n_im})}(u_{n_i})},\dots,\frac{\rho_{K_{n_im}}(u_{n_i})}
{\rho_{\widetilde{+}_{\varphi}(K_{n_i1},\dots,K_{n_im})}(u_{n_i})}\right)\to
\varphi\left(\frac{\rho_{K_{01}}(u_{0})}{c_0},\dots,\frac{\rho_{K_{0m}}(u_{0})}
{c_0}\right),$$
as $i\to\infty$, and hence by the previous expression and (\ref{Orldef}) again, we have $c_0=\rho_{\widetilde{+}_{\varphi}(K_{01},\dots,K_{0m})}(u_{0})$.  But (\ref{dy1}) implies that
$$|\rho_{\widetilde{+}_{\varphi}(K_{n_i1},\dots,K_{n_im})}(u_{n_i})-
\rho_{\widetilde{+}_{\varphi}(K_{01},\dots,K_{0m})}(u_{n_i})|\to
|c_0-\rho_{\widetilde{+}_{\varphi}(K_{01},\dots,K_{0m})}(u_{0})|\ge\ee,$$
as $i\to \infty$, a contradiction.  Therefore $\rho_{\widetilde{+}_{\varphi}(K_{i1},\dots,K_{im})}\rightarrow
\rho_{\widetilde{+}_{\varphi}(K_{01},\dots,K_{0m})}$ uniformly on $S^{n-1}$ as $i\rightarrow\infty$, which is equivalent to the convergence of the corresponding star bodies in the radial metric.
\end{proof}

For $\varphi\in\Phi_2$, radial Orlicz addition $\widetilde{+}_{\varphi}:({\cS}^n)^2\rightarrow {\cS}^n$ is clearly commutative if and only if $\varphi(x_1,x_2)=\varphi(x_2,x_1)$ for all $x_1,x_2\ge 0$, and the corresponding statement is true for $\varphi\in\Psi_2$.  We also have the following result.

\begin{cor}\label{nonassoc}
Let $n\ge 2$.  Radial Orlicz addition $\widetilde{+}_{\varphi}:({\cS}^n)^2\rightarrow {\cS}^n$, for $\varphi\in \Phi_2$, and $\widetilde{+}_{\varphi}:({\cS}^n_+)^2\rightarrow {\cS}^n_+$, for $\varphi\in \Psi_2$, is associative if and only if it is $p$th radial addition for some $p\in \R$, $p\neq 0$.
\end{cor}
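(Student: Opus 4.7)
The forward direction is immediate: when $\widetilde{+}_\varphi=\widetilde{+}_p$, the identity $\rho_{K_1\widetilde{+}_\varphi K_2}^p=\rho_{K_1}^p+\rho_{K_2}^p$ shows that both groupings of a triple produce $(\rho_{K_1}^p+\rho_{K_2}^p+\rho_{K_3}^p)^{1/p}$. For the converse, \eqref{Orldef} expresses $\rho_{K_1\widetilde{+}_\varphi K_2}(x)$ as a function of $\rho_{K_1}(x)$ and $\rho_{K_2}(x)$ alone: there is $F\colon[0,\infty)^2\to[0,\infty)$ (or $(0,\infty)^2\to(0,\infty)$ if $\varphi\in\Psi_2$) with $F(a,b)$ the unique positive $\lambda$ solving $\varphi(a/\lambda,b/\lambda)=1$ (and $F(0,0)=0$) such that $\rho_{K_1\widetilde{+}_\varphi K_2}(x)=F(\rho_{K_1}(x),\rho_{K_2}(x))$. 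Because the pair $(\rho_{K_1}(x),\rho_{K_2}(x))$ at a fixed $x$ can be prescribed arbitrarily using suitably scaled segments along the ray through $x$, associativity of $\widetilde{+}_\varphi$ is equivalent to the pointwise identity $F(F(a,b),c)=F(a,F(b,c))$. By Theorem~\ref{dualOrthm1}, $F$ is continuous, positively homogeneous of degree one, and strictly increasing in each argument.

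The strategy is to show directly that $F(a,b)=(a^p+b^p)^{1/p}$ for some $p\neq 0$. In the $\Phi_2$ case, set $c_1=F(1,0)$ and $c_2=F(0,1)$; homogeneity gives $F(a,0)=c_1 a$ and $F(0,b)=c_2 b$. Applying associativity to $(0,0,b)$ yields
\[
c_2 b=F(0,b)=F(F(0,0),b)=F(0,F(0,b))=F(0,c_2 b)=c_2^2 b,
\]
forcing $c_2=1$, and symmetrically $c_1=1$; hence $0$ is a two-sided identity. Put $S_n=F(1,\dots,1)$ with $n$ arguments, well-defined by associativity, so $S_1=1$ and $\alpha:=S_2=F(1,1)>F(1,0)=1$. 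Associativity yields $F(S_m,S_n)=S_{m+n}$, and combined with homogeneity applied to the iterated operation, $S_t S_n=F(S_t,\dots,S_t)=S_{nt}$ (with $n$ arguments), giving the multiplicativity $S_{nt}=S_n S_t$ on $\N$. Strict monotonicity of $(S_n)$ together with multiplicativity pins down $S_n=n^{1/p}$ with $p=\log 2/\log\alpha$: writing $S_n=n^{1/p}\epsilon$ one has $S_{n^k}=n^{k/p}\epsilon^k$, and sandwiching between $S_{2^m}$ and $S_{2^{m+1}}$ for the unique $m$ with $2^m\le n^k\le 2^{m+1}$ confines $\epsilon^k$ to the interval $[2^{-1/p},2^{1/p}]$ for every $k\in\N$, forcing $\epsilon=1$.

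From $F(n^{1/p},m^{1/p})=(n+m)^{1/p}$ and homogeneity, $F(rn^{1/p},rm^{1/p})=((rn^{1/p})^p+(rm^{1/p})^p)^{1/p}$ for all $r>0$ and $m,n\in\N$; density of the ratios $(n/m)^{1/p}$ in $(0,\infty)$ and continuity of $F$ then give $F(a,b)=(a^p+b^p)^{1/p}$ throughout $[0,\infty)^2$, so $\widetilde{+}_\varphi=\widetilde{+}_p$. The main obstacle I anticipate is the $\Psi_2$ case, where $F$ is only defined on $(0,\infty)^2$, so $0$ is not a candidate identity and the role of two-sided identity is played instead by $\infty$ (continuous extension gives $F(a,\infty)=a=F(\infty,a)$, and now $\alpha=F(1,1)<1$). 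One runs the same multiplicativity-plus-monotonicity argument on the extended $F$ to obtain $p<0$, or else reduces to the increasing class $\Phi_m'$ of Remark~\ref{rem1} via a reciprocal transformation of $\varphi$ and invokes the $\Phi_2$ case directly. This parallels the technical point treated in \cite[Corollary~5.4]{GHW2} for Orlicz addition, with radial functions here replacing support functions throughout.
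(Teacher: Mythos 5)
Your proof is correct for the $\Phi_2$ case and takes a genuinely different route from the paper. The paper's proof is short because it invokes the characterization result \cite[Theorem~7.17]{GHW} (itself built on Pearson's theorem on interval semirings): this gives that the restriction of $\widetilde{+}_\varphi$ to $o$-symmetric star sets is either $p$th radial addition or one of three trivial operations, which are excluded because $\varphi\in\Phi_2$, and the extension to all of $\cS^n$ then follows from the fact that $\rho_{\widetilde{+}_\varphi(K,L)}(x)$ depends only on $(\rho_K(x),\rho_L(x))$. You instead re-derive the key functional-equation result from scratch: reduce associativity to $F(F(a,b),c)=F(a,F(b,c))$ for a continuous, degree-one homogeneous, strictly increasing $F\colon[0,\infty)^2\to[0,\infty)$; show $0$ is a two-sided identity (which, incidentally, shows associativity forces $\varphi\in\Phi_2^{(1)}$, i.e.\ $\varphi(e_1)=\varphi(e_2)=1$); build $S_n=F(1,\dots,1)$, obtain $S_{mn}=S_m S_n$ from associativity plus homogeneity, deduce $S_n=n^{1/p}$ by monotonicity, and extend by homogeneity, density and continuity. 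This is more self-contained and transparent than the paper's appeal to GHW~7.17 at the cost of extra length; it also transparently yields $p>0$ from $\alpha=F(1,1)>F(1,0)=1$, as the paper asserts.

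One caveat: your $\Psi_2$ case is only a sketch. The "extended identity at $\infty$" route would require carefully defining the extension and re-verifying homogeneity, monotonicity and the multiplicativity argument on $(0,\infty]^2$, while the reciprocal-substitution route ($\psi(x)=\varphi(1/x_1,1/x_2)$) does not obviously land in $\Phi_2$ — the limits of $\varphi$ along coordinate directions need not vanish, so $\psi$ may only lie in the class $\Phi_2'$ of Remark~\ref{rem1}, for which your $\Phi_2$ argument (which uses the point $(1,0)$) would need adapting to the open quadrant. Both obstacles seem surmountable, but neither is discharged; the paper sidesteps them by invoking \cite[Theorem~1]{Pearson} in place of \cite[Theorem~2]{Pearson} inside the GHW machinery. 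If you flesh out the $\Psi_2$ case, your proof would be a complete, elementary alternative.
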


\begin{proof}
By Theorem~\ref{dualOrthm1}, radial Orlicz addition is continuous in the sense of pointwise convergence of radial functions, homogeneous of degree 1, $GL(n)$ covariant, and section covariant.  Let $\varphi\in \Phi_2$. By the proof of \cite[Theorem~7.17]{GHW}, the restriction $\widetilde{+}_{\varphi}:({\cS}^n_s)^2\rightarrow {\cS}^n_s$ to the $o$-symmetric sets is either $p$th radial addition for some $p\in \R$, $p\neq 0$, or we have $\widetilde{+}_{\varphi}(K,L)=\{o\}$, or $\widetilde{+}_{\varphi}(K,L)=K$, or $\widetilde{+}_{\varphi}(K,L)=L$, for all $K,L\in {\cS}^n_{s}$.  (Note that while the continuity in \cite[Theorem~7.17]{GHW} is with respect to the radial metric, the proof only requires continuity in the sense of pointwise convergence of radial functions.) However, since $\varphi\in \Phi_2$, the latter three possibilities are excluded and in fact we must have $p>0$.

Now let $K,L\in {\cS}^n$, let $x\in \R^n\setminus\{o\}$, and let $\rho_K(x)=a$ and $\rho_L(x)=b$.  Choose $K',L'\in {\cS}^n_{s}$ such that $\rho_{K'}(x)=a\ge 0$ and $\rho_{L'}(x)=b\ge 0$.  Then by (\ref{Orldef}),  both $\rho_{\widetilde{+}_{\varphi}(K,L)}(x)$ and $\rho_{\widetilde{+}_{\varphi}(K',L')}(x)$ equal the unique $\lambda>0$ such that $\varphi(a/\lambda,b/\lambda)=1$.  Therefore,
$$\rho_{\widetilde{+}_{\varphi}(K,L)}(x)^p=
\rho_{\widetilde{+}_{\varphi}(K',L')}(x)^p=a^p+b^p=
\rho_{K}(x)^p+\rho_{L}(x)^p,$$
which completes the proof when $\varphi\in \Phi_2$.  Essentially the same proof works for $\varphi\in \Psi_2$ when ${\cS}^n$ is replaced by ${\cS}^n_+$, with same conclusion but with $p<0$. The required changes concern the function $f$ used in the proof of \cite[Theorem~7.17]{GHW}, which is now chosen as a function $f:(0,\infty)^2\to (0,\infty)$, and the use of \cite[Theorem~1]{Pearson} instead of \cite[Theorem~2]{Pearson}.
\end{proof}

There is also a natural definition of an {\em Orlicz intersection body $I_\varphi K$ of a star body $K\in {\cS}^n_c$}. By analogy with the $L_p$-intersection body of a star body (see \cite{Hab08}), when $\varphi\in \Phi_1$, we define $I_\varphi K$ to be the star body with radial function
\begin{equation}\label{intbody1}
\rho_{I_\varphi K}(u)=\inf\left\{\lambda>0: \int_{K}\varphi\left(\frac{1}{\lambda|u\cdot x|}\right)\, dx\le 1\right\},
\end{equation}
for $u\in S^{n-1}$, with suitable restrictions imposed on $\varphi$ so that the infimum exists.  More generally, when $\varphi\in \Phi_1$, an {\em Orlicz intersection body} is a star body $I_\varphi\mu$ whose radial function satisfies
\begin{equation}\label{intbody2}
\rho_{I_\varphi\mu}(u)=\inf\left\{\lambda>0: \int_{\R^n}\varphi\left(\frac{1}{\lambda|u\cdot x|}\right)\, d\mu(x)\le 1\right\},
\end{equation}
for $u\in S^{n-1}$, where $\mu$ is a finite Borel measure in $\R^n$.  When $\varphi\in \Psi_1$, the inequality $\le 1$ in (\ref{intbody1}) and (\ref{intbody2}) should be replaced by $\ge 1$. Variants of these definitions are conceivable.  We leave the investigation of these bodies for a future study.

\section{Dual Brunn-Minkowski-type inequalities}\label{DBMI}

The following result provides a dual Orlicz-Brunn-Minkowski inequality.

\begin{thm}\label{dualOBMI1}
Let $m, n\ge 2$, let $\varphi\in \Phi_m$, and let $\varphi_0(x_1,\dots,x_m)=\varphi(x_1^{1/n},\dots,x_m^{1/n})$ for $(x_1,\dots,x_m)\in [0,\infty)^m$.  If $\varphi_0$ is concave, then for all $K_j\in {\cS}^n$, $j=1,\dots,m$, with some $V_n(K_j)>0$,
\begin{equation}\label{obmi}
\varphi\left(\left(\frac{V_n(K_1)}{V_n(\widetilde{+}_{\varphi}(K_1,\dots,
K_m))}\right)^{1/n},\dots,
\left(\frac{V_n(K_m)}{V_n(\widetilde{+}_{\varphi}(K_1,\dots,
K_m))}\right)^{1/n}\right)\ge 1,
\end{equation}
while if $\varphi_0$ is convex, the inequality is reversed.  The same statements hold if instead $\varphi\in \Psi_m$ and $K_j\in {\cS}_{+}^n$, $j=1,\dots,m$.

If $\varphi_0$ is strictly concave (or convex, as appropriate) and $K_j\in {\cS}_{c+}^n$, $j=1,\dots,m$, equality holds if and only if $K_j$, $j=1,\dots,m$, are dilatates.
\end{thm}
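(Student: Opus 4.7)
Set $K := \widetilde{+}_{\varphi}(K_1,\dots,K_m)$. The plan is to rewrite the defining equation \eqref{Orldef} as a statement about the dual cone measure $\widetilde{V}_K$ from \eqref{dualcone}, and then invoke Jensen's inequality (Proposition~\ref{prpJ}) applied to $\varphi_0$.

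First I would verify that $V_n(K)>0$, so that $\widetilde{V}_K$ is well defined. In the $\Phi_m$ case, \eqref{Orldef} together with the fact that $\varphi$ is strictly increasing in each component with $\varphi(o)=0$ forces $\rho_K(u)>0$ whenever some $\rho_{K_j}(u)>0$, so $V_n(K_j)>0$ for some $j$ already yields $V_n(K)\ge V_n(K_j)>0$. In the $\Psi_m$ case, $K\in\cS^n_+$ by the discussion preceding the theorem, so the same conclusion holds trivially. In either case, $\widetilde{V}_K$ has density $\rho_K^n/(nV_n(K))$ on $S^{n-1}$ and is therefore supported on $\{\rho_K>0\}$, where \eqref{Orldef} gives
\[
\varphi\!\left(\frac{\rho_{K_1}(u)}{\rho_K(u)},\dots,\frac{\rho_{K_m}(u)}{\rho_K(u)}\right)=1.
\]

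Next I would integrate this identity against $\widetilde{V}_K$. Writing $g_j(u):=(\rho_{K_j}(u)/\rho_K(u))^n$, the identity becomes $\varphi_0(g_1(u),\dots,g_m(u))=1$ for $\widetilde{V}_K$-a.e.~$u$, so
\[
1=\int_{S^{n-1}}\varphi_0\bigl(g_1(u),\dots,g_m(u)\bigr)\,d\widetilde{V}_K(u).
\]
Applying Proposition~\ref{prpJ} (with $\mu=\widetilde{V}_K$, a probability measure) gives, when $\varphi_0$ is concave,
\[
1\le \varphi_0\!\left(\int_{S^{n-1}} g(u)\,d\widetilde{V}_K(u)\right),
\]
and the $j$-th component of the mean simplifies as
\[
\int_{S^{n-1}} g_j(u)\,d\widetilde{V}_K(u)=\frac{1}{nV_n(K)}\int_{S^{n-1}}\rho_{K_j}(u)^n\,du=\frac{V_n(K_j)}{V_n(K)}.
\]
Unwinding the definition $\varphi_0(x_1,\dots,x_m)=\varphi(x_1^{1/n},\dots,x_m^{1/n})$ then yields \eqref{obmi}. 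The convex case follows by reversing Jensen, and the $\Psi_m$ case proceeds identically since \eqref{Orldef} holds verbatim for $\varphi\in\Psi_m$ with $K_j\in\cS^n_+$.

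For the equality case, assume $K_j\in\cS^n_{c+}$ and $\varphi_0$ is strictly concave (the convex case is symmetric). Then $\widetilde{V}_K$ has a strictly positive continuous density, so the equality clause in Proposition~\ref{prpJ} forces $(g_1,\dots,g_m)$ to be $\widetilde{V}_K$-almost everywhere constant, and by continuity constant on all of $S^{n-1}$. This means $\rho_{K_j}=c_j\rho_K$ for some constants $c_j>0$, so every $K_j$ is a dilatate of $K$, and hence of every other $K_i$. Conversely, if the $K_j$ are pairwise dilatates of some fixed body, a direct substitution shows equality in \eqref{obmi}.

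The only genuine subtlety I anticipate is the application of Proposition~\ref{prpJ}, which is stated for an open convex domain $U$, whereas $g$ may take values on the boundary of $[0,\infty)^m$ (some $g_j$ could vanish when some $V_n(K_j)=0$). This is handled by restricting $\varphi_0$ to the face of $[0,\infty)^m$ determined by $\{j : V_n(K_j)>0\}$---on which the mean $z_0$ lies in the relative interior---and applying a subgradient version of Jensen there, or alternatively by an approximation argument replacing each $K_j$ by $K_j\cup\epsilon B^n$ and letting $\epsilon\downarrow 0$ using the continuity in Theorem~\ref{dualOrthm1}(vii). In the $\cS^n_{c+}$ setting relevant for equality, this issue disappears because all radial functions are bounded away from zero.
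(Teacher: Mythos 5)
Your proposal follows essentially the same route as the paper's own proof: rewrite \eqref{Orldef} as an identity holding $\widetilde{V}_K$-a.e.\ (where $K=\widetilde{+}_{\varphi}(K_1,\dots,K_m)$), apply Jensen's inequality (Proposition~\ref{prpJ}) to $\varphi_0$ against the dual cone measure, evaluate the mean to obtain the quotients $V_n(K_j)/V_n(K)$, and handle the equality case via strict concavity/convexity and continuity of the radial functions. You also correctly identified the one real subtlety, namely that $g$ may take values on $\partial[0,\infty)^m$ even though Proposition~\ref{prpJ} requires $g$ to map into the open set $U$, and your second fix (approximate by $K_j\cup\epsilon B^n$ and pass to the limit via Theorem~\ref{dualOrthm1}(vii)) is precisely the device the paper uses (the paper instead takes $\rho_{K_j(\epsilon)}=\rho_{K_j}+\epsilon$, which is equivalent for this purpose).

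Two small remarks. First, your claim that ``$V_n(K_j)>0$ for some $j$ already yields $V_n(K)\ge V_n(K_j)>0$'' is not justified: for general $\varphi\in\Phi_m$ (not in $\Phi_m^{(1)}$) one can have $\varphi(e_j)<1$, in which case $\rho_K(u)$ may be strictly less than $\rho_{K_j}(u)$, so the comparison $V_n(K)\ge V_n(K_j)$ need not hold. What is true, and all you actually need, is the weaker statement you also gave: $\rho_K(u)>0$ wherever $\rho_{K_j}(u)>0$, which already forces $V_n(K)>0$. The paper sidesteps this by bounding $V_n(K)$ from below by $V_n(\widetilde{+}_{\varphi}(\{o\},\dots,K_{j_0},\dots,\{o\}))$ via the monotonicity of $\widetilde{+}_\varphi$ (Theorem~\ref{dualOrthm1}(vi)). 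Second, your first proposed fix for the boundary issue (restricting $\varphi_0$ to the face determined by $\{j:V_n(K_j)>0\}$) does not by itself resolve the difficulty: even when $V_n(K_j)>0$, the function $g_j(u)$ may vanish for a set of directions $u$, so $g$ still fails to map into the relative interior of that face. The approximation argument you gave as an alternative is the one that actually closes the gap, and it is also the one the paper uses.
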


\begin{proof}
Let $\varphi\in \Phi_m$, let $\varphi_0$ be concave, and initially, suppose that $K_j\in {\cS}^n_+$, $j=1,\dots,m$.  Then $\rho_{\widetilde{+}_{\varphi}(K_1,\dots,K_m)}(u)>0$, for $u\in S^{n-1}$. Hence,  $V_n(\widetilde{+}_{\varphi}(K_1,\dots,
K_m))>0$ and the dual cone measure $\widetilde{V}_{\widetilde{+}_{\varphi}(K_1,\dots,
K_m)}$ (see (\ref{dualcone}) with $K$ replaced by $\widetilde{+}_{\varphi}(K_1,\dots,
K_m)$) is a probability measure in $S^{n-1}$ with positive density with respect to $V_{n-1}$ in $S^{n-1}$.  We will use Jensen's inequality for concave functions (the reverse of (\ref{JenIneq})), with $X=S^{n-1}$, $\mu=\widetilde{V}_{\widetilde{+}_{\varphi}(K_1,\dots,
K_m)}$, $\varphi$ replaced by $\varphi_0$, $\R^n$ replaced by $\R^m$, $U=(0,\infty)^m$, and $g$ defined by
$$g(u)=\left(\frac{\rho_{K_1}(u)^n}{
\rho_{\widetilde{+}_{\varphi}(K_1,\dots,
K_m)}(u)^n},\dots,
\frac{\rho_{K_m}(u)^n}{\rho_{\widetilde{+}_{\varphi}(K_1,\dots,
K_m)}(u)^n}
\right).$$
With this and (\ref{Orldef}), we obtain
\begin{eqnarray*}
1&=& \int_{S^{n-1}}\varphi\left(\frac{\rho_{K_1}(u)}{
\rho_{\widetilde{+}_{\varphi}(K_1,\dots,
K_m)}(u)},\dots,
\frac{\rho_{K_m}(u)}{\rho_{\widetilde{+}_{\varphi}(K_1,\dots,
K_m)}(u)}
\right)\,d\widetilde{V}_{\widetilde{+}_{\varphi}(K_1,\dots,
K_m)}(u)\\
&=& \int_{S^{n-1}}\varphi_0\left(\frac{\rho_{K_1}(u)^n}{
\rho_{\widetilde{+}_{\varphi}(K_1,\dots,
K_m)}(u)^n},\dots,
\frac{\rho_{K_m}(u)^n}{\rho_{\widetilde{+}_{\varphi}(K_1,\dots,
K_m)}(u)^n}
\right)\,d\widetilde{V}_{\widetilde{+}_{\varphi}(K_1,\dots,
K_m)}(u)\\
&\le & \varphi_0\left(\int_{S^{n-1}}\frac{\rho_{K_1}(u)^n}{
\rho_{\widetilde{+}_{\varphi}(K_1,\dots,
K_m)}(u)^n}\,
d\widetilde{V}_{\widetilde{+}_{\varphi}(K_1,\dots,
K_m)}(u),\right.\\
& &
\hspace{1.5in}\left.\dots,\int_{S^{n-1}}\frac{\rho_{K_m}(u)^n}{
\rho_{\widetilde{+}_{\varphi}(K_1,\dots,
K_m)}(u)^n}\,
d\widetilde{V}_{\widetilde{+}_{\varphi}(K_1,\dots,
K_m)}(u)\right)\\
&=&\varphi_0\left(\frac{V_n(K_1)}{V_n(\widetilde{+}_{\varphi}(K_1,\dots,
K_m))},\dots,
\frac{V_n(K_m)}{V_n(\widetilde{+}_{\varphi}(K_1,\dots,
K_m))}\right)\\
&=&\varphi\left(\left(\frac{V_n(K_1)}{V_n(\widetilde{+}_{\varphi}(K_1,\dots,
K_m))}\right)^{1/n},\dots,
\left(\frac{V_n(K_m)}{V_n(\widetilde{+}_{\varphi}(K_1,\dots,
K_m))}\right)^{1/n}\right).
\end{eqnarray*}
Now suppose that $K_j\in {\cS}^n$, $j=1,\dots,m$, with $V_n(K_{j_0})>0$ for some $j_0$. Let $\ee>0$ and define $K_j(\varepsilon)\in\mathcal{S}^n_+$ by $\rho_{K_j(\varepsilon)}(u)=\rho_{K_j}+\varepsilon$, for $u\in S^{n-1}$ and $j=1,\ldots,m$. Then $\rho_{K_j(\varepsilon)}\downarrow\rho_{K_j}$ pointwise as $\varepsilon\downarrow 0$. By the Lebesgue dominated convergence theorem,
$V_n(K_j(\varepsilon))\downarrow V_n(K_j)$ as $\varepsilon\downarrow 0$, for $j=1,\dots,m$. Moreover,  $\rho_{\widetilde{+}_{\varphi}(K_1(\varepsilon),\dots,
K_m(\varepsilon))}\downarrow\rho_{\widetilde{+}_{\varphi}(K_1,\dots,K_m)}$ pointwise as $\varepsilon\downarrow 0$. By the Lebesgue dominated convergence theorem again,
we obtain
$$
V_n(\widetilde{+}_{\varphi}(K_1(\varepsilon),\dots,
K_m(\varepsilon))\downarrow V_n(\widetilde{+}_{\varphi}(K_1,\dots,
K_m))\ge V_n(\widetilde{+}_{\varphi}(\{o\},\dots,K_{j_0},\dots,
\{o\}))>0,
$$
since $\rho_{K_{j_0}}>0$ on a subset of $S^{n-1}$ of positive $V_{n-1}$-measure. Since $\varphi$ is continuous and (\ref{obmi}) holds with $K_j$ replaced by $K_j(\ee)$, $j=1,\dots,m$, the required conclusion follows by taking the limit as $\varepsilon\downarrow 0$.

Suppose that $K_j\in {\cS}_{c+}^n$, $j=1,\dots,m$, and that equality holds in \eqref{obmi}. Then equality also holds in Jensen's inequality.  If $\varphi_0$ is strictly concave, the equality condition for Jensen's inequality, together with the fact that the support of $\widetilde{V}_{\widetilde{+}_{\varphi}(K_1,\dots,
K_m)}$ is now $S^{n-1}$ and all radial functions involved are continuous, imply that $\rho_{K_j}(u)/\rho_{\widetilde{+}_{\varphi}(K_1,\dots,
K_m)}(u)$, $j=1,\dots,m$, and hence $\rho_{K_1}(u)/\rho_{K_j}(u)$, $j=1,\dots,m$, are constant on $S^{n-1}$. This establishes the equality condition.

The remainder of the theorem follows easily by similar arguments.
\end{proof}

Taking $m=2$, $K_1=K$, $K_2=L$, and $\varphi(x_1,x_2)=x_1^p+x_2^p$, $p\in \R$, $p\neq 0$, in Theorem~\ref{dualOBMI1}, we obtain the {\em dual $L_p$-Brunn-Minkowski inequality}
\begin{equation}\label{duallpBM}
V_n(K\widetilde{+}_p\,L)^{p/n}\le V_n(K)^{p/n}+V_n(L)^{p/n},
\end{equation}
and its equality conditions, where $p\in (0,n]$ and $\widetilde{+}_p$ is defined by (\ref{radialslp}), and where the inequality in (\ref{duallpBM}) is reversed if $p<0$ or $p>n$. See \cite[(85), p.~398]{Gar02}.

Of course, many other choices are possible.  For example, let $t\in (0,1)$ and let $\varphi_t(x_1,x_2)=x_1^{n(1-t)}x_2^{nt}$, for $x_1,x_2>0$.  Then $\varphi_t\not\in\Phi_2$, but $\varphi_t\in\Phi'_2$, where $\Phi'_2$ is as in Remark~\ref{rem1}. In this case radial Orlicz addition coincides with the radial log combination $(1-t)K\widetilde{+}_0\,tL$, i.e.,
$$\rho_{\widetilde{+}_{\varphi_t}(K, L)}(x)=\rho_{(1-t)K\widetilde{+}_0\,tL}(x)=
\rho_K(x)^{1-t}\rho_L(x)^t,$$
for $x\in \R^n\setminus\{o\}$ and $K, L\in \mathcal{S}_{+}^n$.  With this choice of $\varphi$, Theorem~\ref{dualOBMI1} yields
$$V_n((1-t)K\widetilde{+}_0\,tL)\le V_n(K)^{1-t}V_n(L)^t,$$
for all $K, L\in \mathcal{S}_{+}^n$.  This {\em dual log-Brunn-Minkowski inequality} is in fact the precise dual of the conjectured (and so far proved only for $n=2$) $\log$-Brunn-Minkowski inequality (see \cite[Problem~1.1]{BLYZ}):
$$V_n((1-t)K+_0\,tL)\ge V_n(K)^{1-t}V_n(L)^t,$$
where $(1-t)K+_0\,tL$ is the log Minkowski combination of origin-symmetric convex bodies $K$ and $L$ in $\R^n$.

\section{Radial Orlicz linear combination and dual Orlicz mixed volume}\label{radOlc}
Let $m,n\ge 2$ and suppose that $\alpha_j> 0$, $j=1,\dots,m$, and either $\varphi_j\in \Phi_1$, $j=1,\dots,m$, or $\varphi_j\in \Psi_1$, $j=1,\dots,m$.  Let
\begin{equation}\label{phrest}
\varphi(x_1,\dots,x_m)=\sum_{j=1}^m\alpha_j\varphi_j(x_j).
\end{equation}
We define the {\em radial Orlicz linear combination} $\widetilde{+}_{\varphi}(K_1,\dots,K_m,\alpha_1,\dots,\alpha_m)$ for all $x\in \R^n\setminus\{o\}$ and $\varphi_j\in \Phi_1$ and $K_j\in{\mathcal S}^n$, $j=1,\dots,m$, by taking the function $\varphi$ in (\ref{OrlComb1}) to be as in (\ref{phrest}); in other words, by
\begin{equation}\label{hK021}
\rho_{\widetilde{+}_{\varphi}(K_1,\dots,K_m,\alpha_1,\dots,\alpha_m)
}(x)=\inf\left\{\lambda>0:\sum_{j=1}^m\alpha_j\,
\varphi_j\left(\frac{\rho_{K_j}(x)}
{\lambda}\right)\le 1\right\}.
\end{equation}
We use the same definition (\ref{hK021}) for $\varphi_j\in \Psi_1$ and $K_j\in{\mathcal S}^n_+$, $j=1,\dots,m$, with $\le 1$ replaced by $\ge 1$.

For our purposes, it suffices to focus on the special case when $m=2$, $\alpha_1=1$, and $\alpha_2=\ee>0$. Henceforth we shall write  $K\widetilde{+}_{\varphi,\ee}L$ instead of $\widetilde{+}_{\varphi}(K,L,1,\ee)$.  The radial Orlicz linear combination $K\widetilde{+}_{\varphi,\ee}L$ can be defined implicitly (and uniquely) for $x\in \R^n\setminus\{o\}$ by
\begin{equation}\label{rlc}
\varphi_1\left(\frac{\rho_K(x)}{\rho_{K\widetilde{+}_{\varphi,\ee} L}(x)}\right)+\ee\varphi_2
\left(\frac{\rho_L(x)}{\rho_{K\widetilde{+}_{\varphi,\ee} L}(x)}\right)=1,
\end{equation}
if $\varphi_1,\varphi_2\in \Phi_1$ and $\rho_K(x)+\rho_L(x)>0$, or if $\varphi_1,\varphi_2\in \Psi_1$ and $K,L\in \mathcal{S}^n_+$.

Note that we have $K\widetilde{+}_{\varphi,\ee}L\in {\cS}^n_c$ (or $K\widetilde{+}_{\varphi,\ee}L\in {\cS}^n_{c+}$) if $K, L\in {\cS}^n_c$ (or $K, L\in {\cS}^n_{c+}$, respectively).

The following lemma is a dual analog of \cite[Lemma~8.2]{GHW2}.  It requires a different proof, since convergence of radial functions does not imply their uniform convergence, as it does for support functions.

\begin{lem}\label{newconvlem}
Suppose that either $\varphi_j\in {\Phi}_1^{(1)}$, $j=1,2$, $K,L\in {\mathcal{S}}^n$, and $c_1B^n\subset K$ for some $c_1>0$, or $\varphi_j\in {\Psi}_1^{(1)}$, $j=1,2$,  $K,L\in {\mathcal{S}}_{+}^n$, and $c_2B^n\subset L$ for some $c_2>0$.  Then $\rho_{K\widetilde{+}_{\varphi,\ee}L}\to \rho_{K}$ uniformly on $S^{n-1}$ as $\ee\to 0+$.
\end{lem}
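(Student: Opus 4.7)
The plan is to obtain two-sided uniform bounds on $\rho_{K\widetilde{+}_{\varphi,\ee}L}(u)$ sandwiching $\rho_K(u)$, with the gap controlled by a quantity depending only on $\ee$ (not on $u$). First I would record the obvious uniform bounds coming from the hypotheses: since $K$ and $L$ are star sets, both $\rho_K$ and $\rho_L$ are bounded above on $S^{n-1}$, say by a common constant $M$, and the extra hypothesis $c_1B^n\subset K$ (respectively $c_2B^n\subset L$) provides a uniform lower bound $c_1$ on $\rho_K$ (respectively $c_2$ on $\rho_L$).

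In the $\Phi_1^{(1)}$ case, write $\lambda_\ee(u)=\rho_{K\widetilde{+}_{\varphi,\ee}L}(u)$ and use the defining identity \eqref{rlc}. Since $\ee\varphi_2\ge 0$, one has $\varphi_1(\rho_K(u)/\lambda_\ee(u))\le 1=\varphi_1(1)$, and strict monotonicity of $\varphi_1$ gives the lower bound $\lambda_\ee(u)\ge \rho_K(u)\ge c_1$. Plugging this back, $\rho_L(u)/\lambda_\ee(u)\le M/c_1$, so by monotonicity of $\varphi_2$,
\[
\ee\varphi_2\!\left(\frac{\rho_L(u)}{\lambda_\ee(u)}\right)\le \ee\,\varphi_2(M/c_1)=:\ee C.
\]
Then \eqref{rlc} forces $\varphi_1(\rho_K(u)/\lambda_\ee(u))\ge 1-\ee C$, whence, for $\ee$ small enough that $1-\ee C>0$,
\[
\rho_K(u)\le \lambda_\ee(u)\le \frac{\rho_K(u)}{\varphi_1^{-1}(1-\ee C)}\le \frac{M}{\varphi_1^{-1}(1-\ee C)}.
\]
The two bounds together yield $0\le \lambda_\ee(u)-\rho_K(u)\le M\bigl(\varphi_1^{-1}(1-\ee C)^{-1}-1\bigr)$, and the right side is independent of $u$ and tends to $0$ as $\ee\to 0+$ by continuity of $\varphi_1^{-1}$ at $1$ and $\varphi_1(1)=1$. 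This gives the uniform convergence.

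The $\Psi_1^{(1)}$ case is handled by the same argument with reversed inequalities dictated by the fact that $\varphi_1,\varphi_2$ are strictly \emph{decreasing}. Now \eqref{rlc} combined with $\ee\varphi_2\ge 0$ gives $\varphi_1(\rho_K(u)/\lambda_\ee(u))\le 1=\varphi_1(1)$, so $\rho_K(u)/\lambda_\ee(u)\ge 1$ (since $\varphi_1$ is decreasing), yielding the upper bound $\lambda_\ee(u)\le \rho_K(u)\le M$. Consequently $\rho_L(u)/\lambda_\ee(u)\ge c_2/M$, so $\varphi_2(\rho_L(u)/\lambda_\ee(u))\le \varphi_2(c_2/M)=:C$, and \eqref{rlc} again gives $\varphi_1(\rho_K(u)/\lambda_\ee(u))\ge 1-\ee C$, i.e., $\rho_K(u)/\lambda_\ee(u)\le \varphi_1^{-1}(1-\ee C)$. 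Thus $0\le \rho_K(u)-\lambda_\ee(u)\le M\bigl(1-\varphi_1^{-1}(1-\ee C)^{-1}\bigr)\to 0$ uniformly in $u$.

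The only subtle point is the bookkeeping of which hypothesis ($c_1B^n\subset K$ versus $c_2B^n\subset L$) supplies the positive lower bound needed to keep the argument of $\varphi_2$ away from the bad asymptote of $\varphi_2$ (either $0$ in the $\Phi$ case giving $\varphi_2(0)=0$ which is fine, or $0$ in the $\Psi$ case where $\varphi_2\to\infty$). In the $\Phi_1$ case we need $\lambda_\ee$ bounded below to bound $\rho_L/\lambda_\ee$ from above, which comes from $\rho_K\ge c_1$; in the $\Psi_1$ case we need $\rho_L/\lambda_\ee$ bounded away from $0$ to bound $\varphi_2$ from above, which comes from $\rho_L\ge c_2$. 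No continuity of $\rho_K$ or $\rho_L$ is required, which is crucial since we are working with general star sets.
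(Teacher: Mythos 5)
Your argument is correct and follows essentially the same route as the paper's proof: use \eqref{rlc} together with monotonicity of $\varphi_1,\varphi_2$ to get the containment $\rho_K\le\lambda_\ee$ (resp.\ $\ge$ in the $\Psi$ case), bound the argument of $\varphi_2$ uniformly in $u$ using the hypothesis $c_1B^n\subset K$ (resp.\ $c_2B^n\subset L$), deduce a uniform lower bound on $\varphi_1(\rho_K/\lambda_\ee)$, and conclude via continuity of $\varphi_1^{-1}$ at $1$ with $\varphi_1(1)=1$. The only cosmetic difference is that the paper packages the uniform constant as $a_1=\sup_{v}\rho_L(v)/\rho_K(v)$ (resp.\ $a_2=\inf_v\rho_L(v)/\rho_K(v)$) and uses $\lambda_\ee\le M_1$ in the final estimate, whereas you use the cruder but equally valid constant $M/c_1$ (resp.\ $c_2/M$) and bound via $\rho_K\le M$; both yield a $u$-independent modulus tending to zero.
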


\begin{proof}
Suppose that $\varphi_j\in {\Phi}_1^{(1)}$, $j=1,2$, and let $\ee\in (0,1]$.  Using (\ref{rlc}), both as it stands and with $\ee$ replaced by $1$, and the fact that $\varphi_1$ and $\varphi_2$ are strictly increasing, it is easy to see that $K\subset K\widetilde{+}_{\varphi,\ee}L\subset K\widetilde{+}_{\varphi,1}L$. Let $M_1<\infty$ be such that $L\subset M_1B^n$ and $K\widetilde{+}_{\varphi,1}L\subset M_1B^n$, and define
$$a_1=\sup_{v\in S^{n-1}}\frac{\rho_L(v)}{\rho_K(v)}\le \frac{M_1}{c_1}<\infty.
$$
Let $u\in S^{n-1}$.  Then, using (\ref{rlc}) and the fact that $\varphi_1$ and $\varphi_2$ are increasing, we obtain
$$
1-\ee\varphi_2(a_1)\le \varphi_1\left(\frac{\rho_K(u)}{\rho_{K\widetilde{+}_{\varphi,\ee}L}(u)}
\right)
.
$$
If $\ee$ is small enough, then $1-\ee \varphi_2(a_1)>0$ and hence
$
\rho_{K\widetilde{+}_{\varphi,\ee}L}(u)\varphi_1^{-1}\left(1-\ee\varphi_2(a_1)\right)\le \rho_K(u)
$.  It follows that
\begin{equation}\label{bound1}
0\le
\rho_{K\widetilde{+}_{\varphi,\ee}L}(u)-\rho_K(u)
\le M_1\left(1-\varphi_1^{-1}\left(1-\ee\varphi_2(a_1)\right)\right).
\end{equation}
Since $\varphi_1(1)=1$ and $\varphi_1$ is strictly increasing and continuous, the same is true for $\varphi_1^{-1}$, and therefore $\rho_{K\widetilde{+}_{\varphi,\ee}L}\to \rho_{K}$ uniformly on $S^{n-1}$ as $\ee\to 0+$.

Suppose that $\varphi_j\in {\Psi}_1^{(1)}$, $j=1,2$, and let $\ee\in (0,1]$. If $K,L\in {\mathcal{S}}_{+}^n$, then we now have $K\widetilde{+}_{\varphi,\ee}L\subset K$. Let $M_2<\infty$ be such that $K\subset M_2B^n$ and define
$$a_2=\inf_{v\in S^{n-1}}\frac{\rho_L(v)}{\rho_K(v)}\ge \frac{c_2}{M_2}>0.
$$
Let $u\in S^{n-1}$.  Then, using (\ref{rlc}) and the fact that $\varphi_1$ and $\varphi_2$ are decreasing, we obtain
$$
1-\ee\varphi_2(a_2)\le \varphi_1\left(\frac{\rho_K(u)}{\rho_{K\widetilde{+}_{\varphi,\ee}L}(u)}
\right).
$$
If $\ee$ is small enough, then $1-\ee \varphi_2(a_2)>0$ and using again that $\varphi_1$ is decreasing, we get
$
\rho_{K\widetilde{+}_{\varphi,\ee}L}(u)\varphi_1^{-1}\left(1-\ee\varphi_2(a_2)\right)\ge \rho_K(u)
$.
This implies
\begin{equation}\label{bound2}
0\le\rho_K(u)-\rho_{K\widetilde{+}_{\varphi,\ee}L}(u)
\le  M_2\left(\varphi_1^{-1}\left(1-\ee\varphi_2(a_2)\right)-1\right)
\end{equation}
and we draw the desired conclusion as before.
\end{proof}

\begin{rem}\label{remarknew1}
{\em
A minor variation of the preceding argument shows the following.
Suppose that either $\varphi_j\in {\Phi}_1^{(1)}$, $j=1,2$, $K\in {\mathcal{S}}^n_+$, and $L\in {\mathcal{S}}^n$, or $\varphi_j\in {\Psi}_1^{(1)}$, $j=1,2$, and $K,L\in {\mathcal{S}}_{+}^n$. Then $\rho_{K\widetilde{+}_{\varphi,\ee}L}\to \rho_{K}$ pointwise on $S^{n-1}$ as $\ee\to 0+$.
}
\end{rem}

\begin{lem}\label{limiteqns}
Let $\varphi_j\in {\Phi}_1^{(1)}$, $j=1,2$, let $K\in {\mathcal{S}}_{+}^n$, and let $L\in {\mathcal{S}}^n$.  If $(\varphi_1)'_l(1)$ exists and is positive, then
\begin{equation}\label{newlimits}
(\varphi_1)'_l(1)\lim_{\ee\rightarrow 0+}\frac{\rho_{K\widetilde{+}_{\varphi,\ee}L}(u)-\rho_K(u)}{\ee}=
\rho_K(u)\varphi_2\left(\frac{\rho_L(u)}{\rho_K(u)}\right),
\end{equation}
for $u\in S^{n-1}$. If $\varphi_j\in {\Psi}_1^{(1)}$, $j=1,2$, $K, L\in {\mathcal{S}}_{+}^n$,  and $(\varphi_1)'_r(1)$ exists and is positive, then \eqref{newlimits} holds with $(\varphi_1)'_l(1)$ replaced by $(\varphi_1)'_r(1)$.
\end{lem}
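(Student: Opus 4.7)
The plan is to extract everything directly from the implicit defining equation (\ref{rlc}). Writing $a=\rho_K(u)>0$, $b=\rho_L(u)\ge 0$, and $\lambda_\ee=\rho_{K\widetilde{+}_{\varphi,\ee}L}(u)$, the defining equation reads
$$\varphi_1(a/\lambda_\ee)+\ee\,\varphi_2(b/\lambda_\ee)=1,$$
while the normalization $\varphi_1(1)=1$ supplies the natural benchmark. Subtracting $\varphi_1(1)$ from both sides and multiplying through by $(a/\lambda_\ee-1)^{-1}\cdot(a/\lambda_\ee-1)$ lets me rearrange into
$$\frac{\lambda_\ee-a}{\ee}=\frac{\lambda_\ee\,\varphi_2(b/\lambda_\ee)}{\bigl[\varphi_1(a/\lambda_\ee)-\varphi_1(1)\bigr]\big/\bigl[a/\lambda_\ee-1\bigr]},$$
so the lemma reduces to identifying the limits of the numerator and denominator.

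First I would invoke Remark~\ref{remarknew1} to conclude that $\lambda_\ee\to a$ as $\ee\to 0+$ (pointwise convergence is all that is needed, since the statement is pointwise in $u$). This immediately gives $b/\lambda_\ee\to b/a$, and continuity of $\varphi_2$ makes the numerator tend to $a\,\varphi_2(b/a)$. For the denominator, set $t_\ee:=a/\lambda_\ee$. In the $\Phi$-case, the inclusion $K\subset K\widetilde{+}_{\varphi,\ee}L$ established inside the proof of Lemma~\ref{newconvlem} forces $\lambda_\ee\ge a$, hence $t_\ee\le 1$, so $t_\ee\to 1^-$ and the difference quotient converges to $(\varphi_1)'_l(1)$. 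In the $\Psi$-case, the reverse inclusion $K\widetilde{+}_{\varphi,\ee}L\subset K$ (again from the proof of Lemma~\ref{newconvlem}) gives $t_\ee\ge 1$, so $t_\ee\to 1^+$ and the limit is $(\varphi_1)'_r(1)$. Combining numerator and denominator and multiplying by the one-sided derivative of $\varphi_1$ at $1$ yields the stated identity in both cases; the degenerate situation $b=0$ in the $\Phi$-case is consistent since then $\varphi_2(0)=0$ forces $\lambda_\ee=a$ for all $\ee$.

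The main subtlety — and essentially the only point that needs care — is the one-sided character of the derivative limit. The convergence $\lambda_\ee\to a$ is not claimed to be monotone in $\ee$, so without further information the ratio $t_\ee$ could in principle oscillate across the value $1$, and then the difference quotient $[\varphi_1(t_\ee)-1]/(t_\ee-1)$ might not converge at all, since only the appropriate one-sided derivative of $\varphi_1$ is assumed to exist. The inclusion relations from Lemma~\ref{newconvlem} are precisely what rule this out: they force $t_\ee$ to stay on the correct side of $1$ uniformly in $\ee$. Once this is secured, the positivity hypothesis on the one-sided derivative guarantees that the denominator is bounded away from $0$ for small $\ee$, so division is legitimate and the stated limit exists and has the claimed value.
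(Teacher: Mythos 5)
Your proposal is correct and takes essentially the same route as the paper: both proofs start from the defining equation (\ref{rlc}), rearrange it so that the difference quotient $[\varphi_1(a/\lambda_\ee)-1]/[a/\lambda_\ee-1]$ appears, use Remark~\ref{remarknew1} for pointwise convergence $\lambda_\ee\to a$, and then pass to the one-sided derivative limit using the sign information $a/\lambda_\ee<1$ (or $>1$). Your observation about the one-sided character of the limit and the role of the inclusion in keeping $t_\ee$ on the correct side of $1$ is exactly the point the paper handles by first noting $\rho_K(u)/\rho_{K\widetilde{+}_{\varphi,\ee}L}(u)<1$ when $\rho_L(u)>0$, and you correctly dispose of $\rho_L(u)=0$ separately.
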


\begin{proof}
Suppose that $\varphi_j\in {\Phi}_1^{(1)}$, $j=1,2$.  Let $u\in S^{n-1}$ and let $\ee>0$. If $\rho_L(u)=0$, then $\rho_{K\widetilde{+}_{\varphi,\ee}L}(u)=\rho_K(u)$ and both sides of \eqref{newlimits} are zero. Suppose that $\rho_L(u)>0$.  Then $\rho_K(u)/\rho_{K\widetilde{+}_{\varphi,\ee}L}(u)< 1$ and we also have $\rho_K(u)>0$ and $\varphi_2({\rho_L(u)}/{\rho_K(u)})>0$. Using these facts, Remark~\ref{remarknew1}, and (\ref{rlc}), we obtain
\begin{align*}
(\varphi_1)'_l(1) &= \lim_{\ee\rightarrow 0+}\frac{1-\varphi_1 \left(\frac{\rho_K(u)}{\rho_{K\widetilde{+}_{\varphi,\ee}L}(u)}\right)}
{1-\frac{\rho_K(u)}{\rho_{K\widetilde{+}_{\varphi,\ee}L}(u)}}
=\lim_{\ee\rightarrow 0+} \rho_{K\widetilde{+}_{\varphi,\ee}L}(u)\frac{\ee\varphi_2 \left(\frac{\rho_L(u)}{\rho_{K\widetilde{+}_{\varphi,\ee}L}(u)}\right)}
{\rho_{K\widetilde{+}_{\varphi,\ee}L}(u)-\rho_K(u)}\\
&=\rho_K(u)\varphi_2\left(\frac{\rho_L(u)}{\rho_K(u)}\right)
\lim_{\ee\rightarrow 0+} \frac{\ee}{\rho_{K\widetilde{+}_{\varphi,\ee}L}(u)-\rho_K(u)},
\end{align*}
which yields (\ref{newlimits}).

If $\varphi_j\in {\Psi}_1^{(1)}$, $j=1,2$, then $\rho_K(u)/\rho_{K\widetilde{+}_{\varphi,\ee}L}(u)> 1$ and as in the previous paragraph, we conclude that \eqref{newlimits} holds with $(\varphi_1)'_l(1)$ replaced by $(\varphi_1)'_r(1)$.
\end{proof}

\begin{thm}\label{maindov}
Suppose that $\varphi_j\in {\Phi}_1^{(1)}$, $j=1,2$, $K,L\in {\mathcal{S}}^n$, and $c_1B^n\subset K$ for some $c_1>0$.  If $(\varphi_1)'_l(1)$ exists and is positive, then
\begin{equation}\label{ndov}
(\varphi_1)'_l(1)\lim_{\ee\rightarrow 0+}\frac{V_n(K\widetilde{+}_{\varphi,\ee}L)-V_n(K)}{\ee}=
\int_{S^{n-1}}
\varphi_2\left(\frac{\rho_L(u)}{\rho_K(u)}\right)\rho_K(u)^n\,du.
\end{equation}
Suppose instead that $\varphi_j\in {\Psi}_1^{(1)}$, $j=1,2$, $K, L\in {\mathcal{S}}_{+}^n$, and $c_2B^n\subset L$ for some $c_2>0$.  If $(\varphi_1)'_r(1)$ exists and is positive, then \eqref{ndov} holds with $(\varphi_1)'_l(1)$ replaced by $(\varphi_1)'_r(1)$.
\end{thm}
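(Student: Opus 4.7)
The plan is to combine the polar volume formula $V_n(M)=\frac{1}{n}\int_{S^{n-1}}\rho_M(u)^n\,du$ with the pointwise difference quotient limit provided by Lemma~\ref{limiteqns}, and then interchange the limit with the integral by dominated convergence, using the one-sided bounds established in the proof of Lemma~\ref{newconvlem}.

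First I would write
$$\frac{V_n(K\widetilde{+}_{\varphi,\ee}L)-V_n(K)}{\ee}=\frac{1}{n}\int_{S^{n-1}}\frac{\rho_{K\widetilde{+}_{\varphi,\ee}L}(u)^n-\rho_K(u)^n}{\ee}\,du$$
and apply the identity $a^n-b^n=(a-b)\sum_{k=0}^{n-1}a^k b^{n-1-k}$ with $a=\rho_{K\widetilde{+}_{\varphi,\ee}L}(u)$ and $b=\rho_K(u)$, so that the integrand splits into the difference quotient $(\rho_{K\widetilde{+}_{\varphi,\ee}L}(u)-\rho_K(u))/\ee$ times a polynomial factor. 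By Lemma~\ref{limiteqns} the difference quotient converges pointwise on $S^{n-1}$ to $\rho_K(u)\varphi_2(\rho_L(u)/\rho_K(u))/(\varphi_1)'_l(1)$, and by Remark~\ref{remarknew1} we have $\rho_{K\widetilde{+}_{\varphi,\ee}L}\to\rho_K$ pointwise, so by continuity the polynomial factor converges pointwise to $n\rho_K(u)^{n-1}$. Multiplying the two limits and integrating would produce the right-hand side of \eqref{ndov}, so the crux is to justify taking the limit inside the integral.

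To this end I would establish $\ee$-uniform bounds on both factors for $\ee\in(0,\ee_0]$. The polynomial factor is controlled by monotonicity of radial Orlicz addition (Theorem~\ref{dualOrthm1}(vi)): for $\ee\le 1$ one has $\rho_K\le\rho_{K\widetilde{+}_{\varphi,\ee}L}\le\rho_{K\widetilde{+}_{\varphi,1}L}\le M_1$ with $M_1$ as in the proof of Lemma~\ref{newconvlem}, hence the sum is dominated by $nM_1^{n-1}$. For the difference quotient, inequality \eqref{bound1} gives
$$0\le\frac{\rho_{K\widetilde{+}_{\varphi,\ee}L}(u)-\rho_K(u)}{\ee}\le M_1\,\frac{1-\varphi_1^{-1}(1-\ee\varphi_2(a_1))}{\ee}.$$
Since $\varphi_1$ is continuous, strictly increasing, and satisfies $\varphi_1(1)=1$ with positive left derivative $(\varphi_1)'_l(1)$, its inverse is right-differentiable at $1$ with derivative $1/(\varphi_1)'_l(1)$, so the right-hand side above tends to the finite constant $M_1\varphi_2(a_1)/(\varphi_1)'_l(1)$ and is therefore uniformly bounded for small $\ee$. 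Dominated convergence then yields \eqref{ndov}.

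The main obstacle is precisely this uniform estimate for the difference quotient, which is where the three hypotheses conspire: membership in $\Phi_1^{(1)}$ supplies $\varphi_1(1)=1$, existence and positivity of $(\varphi_1)'_l(1)$ allows a linearized inversion of $\varphi_1^{-1}$ near $1$, and $c_1B^n\subset K$ forces the constant $a_1=\sup_v\rho_L(v)/\rho_K(v)$ to be finite. For the $\Psi_1^{(1)}$ case the scheme is structurally identical, with the containment $\rho_{K\widetilde{+}_{\varphi,\ee}L}\le\rho_K$ reversing the roles in the polynomial bound, inequality \eqref{bound2} replacing \eqref{bound1}, the quantity $1/(\varphi_1)'_r(1)$ appearing as the left derivative of $\varphi_1^{-1}$ at $1$, and Lemma~\ref{limiteqns} supplying the corresponding pointwise limit with $(\varphi_1)'_l(1)$ replaced by $(\varphi_1)'_r(1)$.
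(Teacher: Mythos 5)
Your proof follows the paper's almost verbatim: polar coordinates, the pointwise limit from Lemma~\ref{limiteqns}, the uniform bound from \eqref{bound1} via the one-sided derivative of $\varphi_1^{-1}$ at $1$, and dominated convergence; you make explicit the algebraic identity $a^n-b^n=(a-b)\sum_{k=0}^{n-1}a^kb^{n-1-k}$, which the paper leaves implicit in its chain-rule step, a welcome clarification since the paper's displayed bound \eqref{upperb1} controls only the first-power difference quotient. One small slip: since $1-\ee\varphi_2(a_1)\to 1^-$ as $\ee\to 0^+$, the relevant one-sided derivative of $\varphi_1^{-1}$ at $1$ in the $\Phi_1^{(1)}$ case is the \emph{left} derivative $(\varphi_1^{-1})'_l(1)=1/(\varphi_1)'_l(1)$, not the right derivative as you wrote; this has no bearing on the validity of the argument.
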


\begin{proof}
Let $\varphi_j\in {\Phi}_1^{(1)}$, $j=1,2$, let $K,L\in {\mathcal{S}}^n$, and let $c_1B^n\subset K$ for some $c_1>0$.   By the polar coordinate formula for volume, we have
$$
(\varphi_1)'_l(1)\left.\frac{d}{d\ee}\right|_{\ee=0+} V_n(K\widetilde{+}_{\varphi,\ee}L)=(\varphi_1)'_l(1)
\frac{1}{n}\lim_{\ee\to 0+}\int_{S^{n-1}}\frac{1}{\ee}\left(\rho_{K\widetilde{+}_{\varphi,\ee}L}(u)^n-\rho_K(u)^n\right)\, du.
$$
For $\ee>0$ sufficiently small and $u\in S^{n-1}$, using the notation from the proof of Lemma~\ref{newconvlem}, we conclude from \eqref{bound1} that
\begin{equation}\label{upperb1}
0\le \frac{1}{\ee}\left(\rho_{K\widetilde{+}_{\varphi,\ee}L}(u)-\rho_K(u)\right)\le M_1\frac{1}{\ee}\left(1-\varphi_1^{-1}(1-\ee \varphi_2(a_1)\right).
\end{equation}
Since $(\varphi_1)'_l(1)>0$ and $\varphi_1$ is strictly increasing,  we also have $(\varphi_1^{-1})'_l(1)>0$, so the right-hand side of \eqref{upperb1} is bounded above uniformly in $\ee>0$. Hence we can apply Lebesgue's dominated convergence theorem and Lemma~\ref{limiteqns} to obtain
\begin{align*}
(\varphi_1)'_l(1)\left.\frac{d}{d\ee}\right|_{\ee=0+} V_n(K\widetilde{+}_{\varphi,\ee}L)
&=(\varphi_1)'_l(1)\frac{1}{n}\int_{S^{n-1}}
\left.\frac{d}{d\ee}\right|_{\ee=0+}\rho_{K\widetilde{+}_{\varphi,\ee}L}(u)^n\, du\\
&=(\varphi_1)'_l(1)\frac{1}{n}\int_{S^{n-1}}
\left.\frac{d}{d\ee}\right|_{\ee=0+}\rho_{K\widetilde{+}_{\varphi,\ee}L}(u) n\rho_K(u)^{n-1}\, du\\
&=\int_{S^{n-1}}\varphi_2\left(\frac{\rho_{L}(u)}{\rho_{K}(u)}\right)
\rho_{K}(u)^{n}\, du.
\end{align*}

The second part of the theorem follows in the same way if \eqref{bound2} is used instead of \eqref{bound1}.
\end{proof}

Let $\varphi:(0,\infty)\to \R$.  The formula (\ref{ndov}) for the first variation of volume suggests defining the {\em dual Orlicz mixed volume} of star sets $K,L\in\mathcal{S}^n_+$  by
\begin{equation}\label{dualOmv}
\widetilde{V}_\varphi(K,L)=\frac{1}{n}\int_{S^{n-1}}\varphi\left(\frac{\rho_{L}(u)}
{\rho_{K}(u)}\right)\rho_{K}(u)^{n}\,du=V_n(K)\int_{S^{n-1}}\varphi\left(\frac{\rho_L(u)}{\rho_K(u)}\right)\, d\widetilde{V}_K(u),
\end{equation}
whenever these expressions make sense, where integration on the right is with respect to the dual cone measure of $K$, defined by (\ref{dualcone}). A similar
remark applies if $\varphi:[0,\infty)\to\R$ and $K,L$ belong to an appropriate class of star sets.

For example, in the important special case when $\varphi(t)=t^p$, $p\in \R\setminus\{0\}$, the dual Orlicz mixed volume $\widetilde{V}_\varphi(K,L)$ becomes
\begin{equation}\label{dmvp}
\widetilde{V}_p(K,L)=\frac{1}{n}\int_{S^{n-1}}\rho_K(u)^{n-p}
\rho_L(u)^p\,du,
\end{equation}
where the latter quantity is defined as in \cite[(A.56), p.~410]{Gar06} (with $i=p$).  Here it would be assumed that $K\in {\cS}^n$ contains a ball with positive radius if $p>n$ and that $L\in {\cS}^n$ contains a ball with positive radius if $p<0$.

\section{Dual Orlicz-Minkowski inequalities}\label{dOMi}

In this section, we prove some dual Orlicz-Minkowski inequalities and corollaries thereof.

\begin{thm}\label{maindom}
Let $\varphi:(0,\infty)\to \R$ be such that $\varphi_0(t)=\varphi(t^{1/n})$, $t>0$, is concave.  If $K,L\in {\cS}_{c+}^n$, then
$$
\widetilde{V}_\varphi (K,L)\le V_n(K)\,\varphi\left(\left( \frac{V_n(L)}{V_n(K)} \right)^{1/n}\right),
$$
while if $\varphi_0$ is convex, the inequality is reversed.  If $\varphi_0$ is strictly concave (or convex, as appropriate), equality holds if and only if $K$ and $L$ are dilatates.
\end{thm}

\begin{proof}
By (\ref{dualOmv}) and Jensen's inequality for concave functions (the reverse of (\ref{JenIneq})), we obtain
\begin{align*}
\widetilde{V}_\varphi(K,L)&=V_n(K)\int_{S^{n-1}}\varphi\left(\frac{\rho_L (u)}{\rho_K( u)}\right)\, d\widetilde{V}_K(u)\\
&=V_n(K)\int_{S^{n-1}}\varphi_0\left(\left(\frac{\rho_L(u)}{\rho_K(u)}
\right)^n\right)\, d\widetilde{V}_K(u)\\
&\le V_n(K)\,\varphi_0\left(\int_{S^{n-1}}\left(\frac{\rho_L(u)}{\rho_K(u)}
\right)^n\,d\widetilde{V}_K(u)\right)\\
&=V_n(K)\,\varphi_0\left(\int_{S^{n-1}}\frac{\rho_L(u)^n}{nV_n(K)}\, du\right)\\
&= V_n(K)\,\varphi_0\left(\frac{V_n(L)}{V_n(K)}\right)=
V_n(K)\,\varphi\left(\left(\frac{V_n(L)}{V_n(K)}\right)^{1/n}\right).
\end{align*}
If equality holds, then equality also holds in Jensen's inequality.  The assumption that $\varphi_0$ is strictly concave and the equality condition for Jensen's inequality imply that $\rho_{L}(u)/\rho_{K}(u)$ is constant on $S^{n-1}$.  This establishes the equality condition.

The remainder of the theorem follows in the same fashion.
\end{proof}

Note that taking $\varphi(t)=t^p$, $p\in \R$, in Theorem~\ref{maindom}, we retrieve the {\em dual $L_p$-Minkowski inequality}
\begin{equation}\label{duallpM}
\widetilde{V}_p(K,L)^n\le V_n(K)^{n-p}V_n(L)^p,
\end{equation}
and its equality conditions, where $p\in [0,n]$ and $\widetilde{V}_p(K,L)$ is defined by (\ref{dmvp}), and where the inequality in (\ref{duallpM}) is reversed if $p<0$ or $p>n$. See \cite[(B.29), p.~422]{Gar06} (with $i=p$).  We can also obtain the following result, which follows directly from Theorem~\ref{maindom} on taking $\varphi(t)=\log t$.

\begin{thm}\label{starlog}
If $K,L\in {\cS}_{c+}^n$, then
$$
\int_{S^{n-1}}\log\left(\frac{\rho_L(u)}{\rho_K(u)}\right)\, d\widetilde{V}_{K}(u)\le\log\left(\left(\frac{V_n(L)}{V_n(K)}\right)^{1/n}
\right),
$$
with equality if and only if $K$ and $L$ are dilatates.
\end{thm}

It is remarkable that the previous inequality is precisely the dual of the conjectured (and so far proved only for $n=2$) $\log$-Minkowski inequality (see \cite[p.~1976]{BLYZ}):
$$
\int_{S^{n-1}}\log\left(\frac{h_L(u)}{h_K(u)}\right)\, d\overline{V}_{K}(u)\ge\log\left(\left(\frac{V_n(L)}{V_n(K)}\right)^{1/n}
\right).
$$
Here $h_K$ and $h_L$ are the support functions of centrally symmetric convex bodies $K$ and $L$ in $\R^n$ and integration is with respect to the Borel probability measure in $S^{n-1}$ defined by
$$
d\overline{V}_K(u)=\frac{h_K(u)}{nV_n(K)}\,dS(K,u),
$$
where $S(K,\cdot)$ is the surface area measure of $K$.  This measure is called the normalized cone measure (or cone-volume probability measure) of $K$.

\begin{cor}\label{polarcor}
Let $K$ and $L$ be convex bodies in $\R^n$ containing the origin in their interiors. Then
$$
\int_{S^{n-1}}\log\left(\frac{h_{L}(u)}{h_K(u)}\right)\, d\widetilde{V}_{K^\circ}(u)\ge\log\left(\left(\frac{V_n(K^\circ)}{V_n(L^\circ)}
\right)^{1/n}\right),
$$
with equality if and only if $K$ and $L$ are dilatates.
\end{cor}

\begin{proof}
In Theorem~\ref{starlog}, replace $K$ and $L$ by the polar bodies $K^\circ$ and $L^\circ$, respectively, and use the relation $\rho_{K^\circ}(u)=1/h_K(u)$, $u\in S^{n-1}$ (see \cite[(0.36), p.~20]{Gar06}).
\end{proof}

We end this section by remarking that Theorems~\ref{dualOBMI1} and~\ref{maindom} are related, as follows.  Firstly, Theorem~\ref{maindom} implies the important special case of Theorem~\ref{dualOBMI1} when $\varphi$ is defined by (\ref{spphi}).  To see this, one applies Theorem~\ref{maindom} with $K$, $L$, and $\varphi$ replaced by $K_1\widetilde{+}_{\varphi}\cdots \widetilde{+}_{\varphi}K_m$, $K_j$, and $\varphi_j$, respectively, for $j=1,\dots,m$, following the analogous argument in the remark after \cite[Theorem~9.2]{GHW2}.  On the other hand, Theorem~\ref{dualOBMI1} implies Theorem~\ref{maindom} if it is assumed in addition that $\varphi\in \Phi_1^{(1)}\cup \Psi_1^{(1)}$.  This can be seen by applying Theorem~\ref{dualOBMI1} with $m=2$, $K_1=K$, and $K_2=L$, to the function $\varphi(x_1)+\varphi(x_2)$, and following the analogous argument in \cite[pp.~370--371]{XJL}.

\section{Radial $M$-addition}\label{radialMadd}

For an arbitrary set $M\subset [0,\infty)^m$ and star sets $K_j\in {\cS}^n$, $j=1,\dots,m$, define the {\em radial $M$-sum} of $K_1,\dots,K_m$ by
\begin{equation}\label{radialMdef}
\widetilde{\oplus}_M(K_1,\dots,K_m)=\cup_{(\alpha_1,\dots,\alpha_m)\in M}
\,\alpha_1K_1\widetilde{+}\cdots\widetilde{+}\alpha_mK_m,
\end{equation}
where the radial linear combination in the union is defined as in (\ref{radials2}). This definition results from that of $M$-addition (see \cite[(25)]{GHW2} and the equivalent definition given immediately after it) when the Minkowski addition there is replaced by radial addition.

For each fixed $(\alpha_1,\dots,\alpha_m)\in M$, the radial linear combination $\alpha_1K_1\widetilde{+}\cdots\widetilde{+}\alpha_mK_m\in {\cS}^n$, but in general $\widetilde{\oplus}_M(K_1,\dots,K_m)\not\in {\cS}^n$ since it may not be a bounded Borel set.  However, if $M$ is compact, then $\widetilde{\oplus}_M(K_1,\dots,K_m)\in {\cS}^n$, so we shall make this assumption about $M$ from now on.

The formula
\begin{equation}\label{M1}
\rho_{\widetilde{\oplus}_M(K_1,\dots,K_m)}(x)=h_{\conv M}\left(\rho_{K_1}(x),\dots,
\rho_{K_m}(x)\right),
\end{equation}
holds for $x\in \R^n\setminus\{o\}$, where $h_{\conv M}$ denotes the support function of the convex hull of $M$.  Indeed,
\begin{eqnarray*}
\rho_{\widetilde{\oplus}_M(K_1,\dots,K_m)}(x)&=
&\rho_{\cup_{(\alpha_1,\dots,\alpha_m)\in M}
\,\alpha_1K_1\widetilde{+}\cdots\widetilde{+}\alpha_mK_m}(x)\\
&=& \max\{\rho_{
\alpha_1K_1\widetilde{+}\cdots\widetilde{+}\alpha_mK_m}(x):
(\alpha_1,\dots,\alpha_m)\in M\}\\
&=& \max\{\alpha_1\rho_{K_1}(x)+\cdots+\alpha_m \rho_{K_m}(x):(\alpha_1,\dots,\alpha_m)\in M\}\\
&=& \max\{(\alpha_1,\dots,\alpha_m)\cdot\left(\rho_{K_1}(x),\dots, \rho_{K_m}(x)\right):(\alpha_1,\dots,\alpha_m)\in M\}\\
&=&h_{\conv M}\left(\rho_{K_1}(x),\dots,
\rho_{K_m}(x)\right).
\end{eqnarray*}
In particular, if $M$ is also convex, we have
\begin{equation}\label{M2}
\rho_{\widetilde{\oplus}_M(K_1,\dots,K_m)}(x)=h_{M}\left(\rho_{K_1}(x),\dots,
\rho_{K_m}(x)\right),
\end{equation}
for $x\in \R^n\setminus\{o\}$.

It may be surprising that (\ref{M1}) and (\ref{M2}) involve a support function.  But note that if $M\in {\cS}^n$, then the function
$$\rho_{M}\left(\rho_{K_1}(x),\dots,\rho_{K_m}(x)\right),$$
for $x\in \R^n\setminus\{o\}$ (the dual analog of \cite[Theorem 6.5(ii)]{GHW}, for example), does not define a radial function since it is homogeneous of degree 1 and not $-1$.

When $m=2$, the radial $M$-sum of $K, L\in {\cS}^n$ is denoted by $K\widetilde{\oplus}_M L$.  Note that in this case, if $\{(1,1)\}\subset M\subset [0,1]^2$, then $\widetilde{\oplus}_M$ is ordinary radial addition, and if
\begin{equation}\label{mmm}
M=\left\{(a,b)\in [0,1]^2:a^{p'}+b^{p'}=1\right\}=\left\{\left((1-t)^{1/p'},t^{1/p'}\right): 0\le t\le 1\right\},
\end{equation}
where $p>1$ and $1/p+1/p'=1$ (or $\conv M$ or $\conv\{M,o\}$ with $M$ as in (\ref{mmm})), then $\widetilde{\oplus}_M$ is $p$th radial addition.

From either (\ref{radialMdef}) or (\ref{M1}), it can be verified that $\widetilde{\oplus}_M:\left({\cS}^n\right)^m\to {\cS}^n$ is homogeneous of degree 1, $GL(n)$ covariant, section covariant, and continuous in the sense of pointwise convergence of radial functions.  It does not in general have the identity property.

For the next result, recall that a set in $\R^n$ is called {\em $1$-unconditional} if it is symmetric with respect to each coordinate hyperplane.

\begin{thm}\label{orlmadd}
If $M$ is a $1$-unconditional convex body in $\R^m$, then there is a convex function $\varphi\in \Phi_m$ such that
\begin{equation}\label{morl} \widetilde{\oplus}_{M\cap [0,\infty)^m}(K_1,\dots,K_m)=\widetilde{+}_{\varphi}(K_1,\dots,K_m),
\end{equation}
for all $K_j\in {\cS}^n$, $j=1,\dots,m$.

Conversely, given a convex $\varphi\in \Phi_m$, there is a $1$-unconditional convex body $M$ in $\R^n$ such that \eqref{morl} holds for all $K_j\in {\cS}^n$, $j=1,\dots,m$.
\end{thm}

\begin{proof}
Since $M$ is $1$-unconditional, its polar body $M^{\circ}$ is also.  Then, by an easy modification of \cite[Theorem~5.4]{GHW2} (with $K$ there replaced by $M^{\circ}$), there is a convex function $\varphi\in \Phi_m$ such that the part of the boundary of $M^{\circ}$ contained in $[0,\infty)^m$ is given by $\{(x_1,\dots,x_m)\in [0,\infty)^m: \varphi(x_1,\dots,x_m)=1\}$.  Let $J_{\varphi}$ be the $1$-unconditional convex body in $\R^m$ defined by
\begin{equation}\label{jphi}
J_{\varphi}\cap [0,\infty)^m=\{(x_1,\dots,x_m)\in [0,\infty)^m:\varphi(x_1,\dots,x_m)\le 1\}.
\end{equation}
The argument in the first paragraph of the proof of \cite[Theorem~5.3]{GHW2} shows that $J_{\varphi}$ is indeed a $1$-unconditional convex body.  Then we have $M^{\circ}=J_{\varphi}$ and hence $M=J_{\varphi}^{\circ}$. Let $x\in\R^n\setminus\{o\}$ and let $K_j\in {\cS}^n$, $j=1,\dots,m$, be such that $\rho_{K_1}(x)+\cdots+\rho_{K_m}(x)>0$.  By (\ref{M2}),
\begin{eqnarray*}
\rho_{\widetilde{\oplus}_{M\cap [0,\infty)^m}(K_1,\dots,K_m)}(x)&=&h_{M\cap [0,\infty)^m}\left(\rho_{K_1}(x),
\dots,\rho_{K_m}(x)\right)=h_{M}\left(\rho_{K_1}(x),
\dots,\rho_{K_m}(x)\right)\\
&=&h_{J_{\varphi}^{\circ}}\left(\rho_{K_1}(x),\dots,
\rho_{K_m}(x)\right)=\rho_{J_{\varphi}}\left(\rho_{K_1}(x),\dots,
\rho_{K_m}(x)\right)^{-1}.
\end{eqnarray*}
By the definition of the radial function, $\rho_{J_{\varphi}}\left(\rho_{K_1}(x),\dots,
\rho_{K_m}(x)\right)$ is the number $c$ such that
$$c\left(\rho_{K_1}(x),\dots,
\rho_{K_m}(x)\right)\in\partial J_{\varphi}.$$  But this implies that $\varphi\left(c\rho_{K_1}(x),\dots,c\rho_{K_m}(x)\right)=1$ and hence, from (\ref{Orldef}),
\begin{equation}\label{eqrhos}
\rho_{\widetilde{+}_{\varphi}(K_1,\dots,K_m)}(x)=c^{-1}=
\rho_{\widetilde{\oplus}_{M\cap [0,\infty)^m}(K_1,\dots,K_m)}(x).
\end{equation}
If $\rho_{K_1}(x)=\cdots=\rho_{K_m}(x)=0$, the equality of the two radial functions in (\ref{eqrhos}) holds trivially. Therefore (\ref{morl}) holds.

Conversely, suppose that $\varphi\in \Phi_m$ is a convex function. Let $J_{\varphi}$ be the $1$-unconditional convex body in $\R^m$ defined by (\ref{jphi}).  Let $x\in\R^n\setminus\{o\}$ and let $K_j\in {\cS}^n$, $j=1,\dots,m$, be such that $\rho_{K_1}(x)+\cdots+\rho_{K_m}(x)>0$.  If $c>0$ is such that $\rho_{\widetilde{\oplus}_{M\cap [0,\infty)^m}(K_1,\dots,K_m)}(x)=1/c$, the steps in the previous paragraph can be reversed to conclude that (\ref{eqrhos}) and hence (\ref{morl}) holds with $M=J_{\varphi}^{\circ}$.
\end{proof}

The second statement in Theorem~\ref{orlmadd} is not true in general if $\varphi$ is not a convex function in $\Phi_m$.  For example, suppose that $m=2$ and $\varphi(s,t)=\sqrt{s}+\sqrt{t}$ for $s,t\ge 0$.  From (\ref{Orldef}), we get
$$\rho_{K\widetilde{+}_{\varphi}L}(x)=\left(\sqrt{\rho_K(x)}+
\sqrt{\rho_L(x)}\right)^2,$$
for all $x\in \R^n\setminus\{o\}$ and $K,L\in{\cS}^n$.  Comparing with (\ref{M1}), we see that if $\widetilde{+}_{\varphi}$ is a radial $M$-addition on the class of star sets, we would have
$$h_{\conv M}\left(\rho_{K}(x),\rho_L(x)\right)=\left(\sqrt{\rho_K(x)}+
\sqrt{\rho_L(x)}\right)^2,$$
for $x\in \R^n\setminus\{o\}$ and all $K,L\in{\cS}^n$. Applying this equation with $K=sB^n$ and $L=tB^n$ for $s,t\ge 0$, we conclude that
$$h_{\conv M}(s,t)=\left(\sqrt{s}+\sqrt{t}\right)^2,$$
for $s,t\ge 0$.  However, the function on the right-hand side is not sublinear and hence not a support function, so $\widetilde{+}_{\varphi}$ is not a radial $M$-addition for any $M$.

\section*{Appendix}\label{Appendix}

The present paper is a combination of a manuscript by the first three authors dated May~21, 2013, and the preprint arXiv:1404.6991 written independently by the fourth author.  After the latter article was completed, the independent work \cite{ZZ} (see also \cite{ZZX}) came to the attention of the fourth author, who thanks B.~Zhu for communicating it.

In \cite{ZZ}, only functions $\varphi:(0,\infty)^2\to (0,\infty)$ of the form $\varphi(x_1,x_2)=\alpha_1\phi(x_1)+\alpha_2\phi(x_2)$, where $\phi$ is a convex strictly decreasing function on $(0,\infty)$, are considered.  Moreover, for the most part \cite{ZZ} deals only with the class ${\cS}^n_{c+}$.  The corresponding special cases of Theorems~\ref{dualOBMI1} and~\ref{maindom} are proved in \cite{ZZ}, but only under the rather stronger condition that the function $\phi$ is convex.

\bigskip

\end{document}